\def\ttx{{\tt x}}
\title[Kernel Hierarchy]{A hierarchical structure of transformation semigroups with applications to probability limit measures}
\author{G. Budzban and Ph. Feinsilver}
\address{Department of Mathematics\\
Southern Illinois University\\
Carbondale, IL\\
62901 USA}
\begin{document} 
\maketitle
\begin{abstract}
The structure of transformation semigroups on a finite set is analyzed by introducing a hierarchy of functions mapping subsets to subsets.  
The resulting hierarchy of semigroups has a corresponding hierarchy of minimal ideals, or kernels.  This kernel hierarchy produces a set of tools that provides direct access to computations of interest in probability limit theorems; in particular, finding certain factors of idempotent limit measures. 
In addition, when considering transformation semigroups that arise naturally from edge colorings of directed graphs, as in the road-coloring problem, the hierarchy produces simple techniques to determine the rank of the kernel and to decide when a given kernel is a right group.  
In particular, it is shown that all kernels of rank one less than the number of vertices must be right groups and their structure for the case of two generators is described.
\end{abstract}
\thispagestyle{empty}

\begin{section}{Introduction}
The study of transformation semigroups has applications in fields as diverse as probability, automata theory, and discrete dynamical systems.  There are two main sources of this current investigation: discrete dynamical systems that emerge from edge-coloring digraphs, such as in the road-coloring problem and limit theorems for convolution powers of probability measures on finite semigroups.  In all of these cases, it is important to discern asymptotic properties of the system 
directly from the generators.  Techniques that permit this forecasting of asymptotic behavior have many applications according to the context. \bigskip

To this end, in Section 2 of this paper, given a starting collection of functions on a finite set, of size $n$, 
we consider the hierarchy of functions induced on subsets of level $\l$, for $1\le\l\le n$, and develop the machinery for this hierarchy.  
In Section 3, we recall the structure of finite semigroups and of idempotent probability measures on these semigroups.  In Section 4, the kernel hierarchy is analyzed in detail and an important theorem of Friedman is recalled.  In addition, the hierarchy is used to obtain information about the limit measures of Section 3.  
Finally, in section 5, hierarchy computations which reveal the rank and structure of the kernel are presented.
\end{section}

\begin{section}{Function hierarchy}

\begin{notation}     
Indices, even though denoted as $\ell$-tuples are identified with the corresponding {\it sets\/}, i.e., unordered, distinct elements,
denoted using uppercase roman letters. As indices, they follow \textit{dictionary order} within each level $\l$.  \\
All vectors are conventionally row vectors. Transposes will be denoted by superscript plus, so
that a typical column vector is $v^+$. We use italic type for the identity matrix. 
\end{notation} \bigskip

Given any finite set $S$, of cardinality $|S|=n$,
let $\F=\{f\colon S\to S\}$. Let $\P=\P(S)$ denote the power set of $S$. Denote the $\l^{\rm th} $\textit{layer}, 
$$\P_\l=\{\rI:|\rI|=\l\}\subset \P$$ of subsets of cardinality $\l$, $0\le\l\le n$. We wish to extend functions in $\F$ to
$$\P=\bigcup_{\l=0}^n \P_\l$$
acting on layers. For layer 0, define $f(\emptyset)=\emptyset$. For layer $\l$, we adjoin the empty set and define
$$\spow{f}{\l}(\rI)=\begin{cases} \rJ,&\text{ if } f(\rI)=\rJ\in\P_\l\\ \emptyset,&\text{ otherwise}\end{cases}
$$
We have a correspondence of functions with matrices $f\leftrightarrow F$, $\spow{f}{\l}~\leftrightarrow~\spow{F}{\l}$ accordingly:
$$ F_{ij}=\begin{cases}1,&\text{ if } f(i)=j\\ 0,&\text{ otherwise}\end{cases}$$
And for $\l\ge1$,
$$ \spow{F}{\l}_{\rI\rJ}=\begin{cases}1,&\text{ if } f(\rI)=\rJ\in\P_\l\\ 0,&\text{ otherwise}\end{cases}$$
Note that we adjoin the empty set in the definition of $\spow{f}{\l}$ in order to consider composition of functions on $\P_\l$.
Using the corresponding matrix, we see that a row of zeros is automatically perpetuated when taking matrix products. \bigskip

In order to get a non-zero entry in \spow{F}{\l}, we observe that since $\rI$ and $\rJ$ have cardinality $\l$, we can form the
$\l\times\l$ submatrix of $F$ with rows $\rI$ and columns $\rJ$. Since $\rI$ maps onto $\rJ$, this must be a permutation matrix.
Since we want an entry of $+1$ regardless of the sign of the permutation, we are taking the \textit{permanent} of this submatrix.
In other words,
\begin{quotation}
the $\rI\rJ$ entry of \spow{F}{\l} is the permanent of the submatrix of $F$ with row labels $\rI$ and column labels $\rJ$
\end{quotation}
With this interpretation, the zero rows appear quite naturally. \bigskip

The main definitions are  \bigskip

\begin{definition} An $\l$-set $\rI$ is said to be $f$-\textit{preserved} if it maps to $\rJ\in\P_\l$ by $f$. \medskip
We say that $f$ \textit{collapses} $\rI$ if $|f(\rI)|<|\rI|$.
\end{definition} 

So the $f$-preserved sets provide the non-zero entries in the corresponding power $\spow{f}{\l}$. \bigskip

\begin{subsubsection}{\bf Notation}\label{sssec:not}
 A convenient notation for functions on an $n$-set is to write a label of the form 
$$[f(1)f(2)\ldots f(n)]$$
For example, $f=[2344]$ means that $1\to2\to3\to4$, $4\to4$, by $f$.
\end{subsubsection} \bigskip

\begin{example} Let $n=4$. Then $f=[2344]$ has matrix
$$F=\begin{bmatrix}0&1&0&0\\ 0&0&1&0 \\ 0&0&0&1\\ 0&0&0&1\end{bmatrix}$$
For $\l=2$, we have row and column labels $12,13,14,23,24,34$, corresponding to the subsets of size 2, and
$$\spow{F}{2}= \left[ \begin {array}{cccccc} 0&0&0&1&0&0\\0&0&0&0&
1&0\\0&0&0&0&1&0\\0&0&0&0&0&1
\\0&0&0&0&0&1\\0&0&0&0&0&0\end {array} \right]$$
Consider $g$ with matrix
$$G=\begin{bmatrix}0&1&0&0\\ 0&1&0&0 \\ 0&0&0&1\\ 0&0&1&0\end{bmatrix}$$
and 
$$\spow{G}{2}= \left[ \begin {array}{cccccc} 0&0&0&0&0&0\\0&0&0&0&1&0\\
0&0&0&1&0&0\\0&0&0&0&1&0\\0&0&0&1&0&0\\0&0&0&0&0&1\end {array} \right]$$

We check that the composition $h=g^\circ f$ corresponds to matrix multiplication $H=FG$:
$$\spow{H}{2}= \left[ \begin {array}{cccccc} 0&0&0&0&1&0\\0&0&0&1&0&0\\0&0&0&1&0&0\\0&0&0&0&0&1
\\0&0&0&0&0&1\\0&0&0&0&0&0\end {array} \right]$$
which is easily checked to agree with $\spow{F}{2}\spow{G}{2}$.
\end{example}
We now prove the main result of this section. 

\begin{theorem}\label{thm:hom}
For each $\l\ge1$, the map $f\to\spow{f}{\l}$ is a homomorphism of the semigroup of
functions under composition. Equivalently, in terms of the correspondence with matrices, we have the matrix homomorphisms
$$\spow{(FG)}{\l}=\spow{F}{\l}\spow{G}{\l}$$
\end{theorem}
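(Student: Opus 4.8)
The plan is to prove the purely \emph{functional} form of the statement, that
$\spow{(g\circ f)}{\l}=\spow{g}{\l}\circ\spow{f}{\l}$ as self-maps of $\P_\l\cup\{\emptyset\}$, and then to transfer it to matrices using the correspondence between composition of functions and multiplication of their $0$--$1$ matrices recorded in the worked example (where $g\circ f\leftrightarrow FG$). All the content sits in one structural observation about how the notions \emph{preserved} and \emph{collapses} interact with composition, so I would isolate that first.

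The observation is the following monotonicity chain: for any $\l$-set $\rI$,
$$|g(f(\rI))|\le|f(\rI)|\le|\rI|=\l,$$
since a function can never enlarge the cardinality of a set. Hence $\rI$ is $(g\circ f)$-preserved if and only if equality holds throughout, i.e. if and only if $f$ is injective on $\rI$ (so $\rI$ is $f$-preserved) \emph{and} $g$ is injective on $f(\rI)$ (so $f(\rI)$ is $g$-preserved); and in that event $\spow{(g\circ f)}{\l}(\rI)=g(f(\rI))=\spow{g}{\l}(\spow{f}{\l}(\rI))$. The functional identity then follows by a three-case check on $\rI$. If $\rI$ is $f$-preserved and $f(\rI)$ is $g$-preserved, both sides carry $\rI$ to the $\l$-set $g(f(\rI))$. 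If $f$ collapses $\rI$, then $\spow{f}{\l}(\rI)=\emptyset$, so the right side is $\spow{g}{\l}(\emptyset)=\emptyset$, while $g\circ f$ also collapses $\rI$ and the left side is $\emptyset$ as well. The only case demanding attention is the mixed one, where $f$ preserves $\rI$ but $g$ collapses $f(\rI)$: here the right side is $\spow{g}{\l}(f(\rI))=\emptyset$, and the left side is $\emptyset$ because the strict drop $|g(f(\rI))|<\l$ makes $g\circ f$ collapse $\rI$. These cases are exhaustive on $\P_\l$, and the adjoined empty set is handled by the convention $f(\emptyset)=\emptyset$.

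To pass to matrices I would note that $\spow{f}{\l}$ is a genuine self-map of the finite set $\P_\l\cup\{\emptyset\}$ with $\emptyset$ absorbing, and that $\spow{F}{\l}$ is precisely its $0$--$1$ matrix once the identically zero $\emptyset$-row and $\emptyset$-column are suppressed; a set that collapses simply shows up as a zero row. Fixing $\l$-sets $\rI$ and $\rJ$, the fact that each row of $\spow{F}{\l}$ has at most one nonzero entry forces $(\spow{F}{\l}\spow{G}{\l})_{\rI\rJ}=1$ exactly when $\spow{f}{\l}(\rI)$ is an $\l$-set that $\spow{g}{\l}$ carries to $\rJ$, that is, exactly when $\spow{(g\circ f)}{\l}(\rI)=\rJ$; the absorbing behavior of $\emptyset$ is reproduced automatically, since a zero row stays zero in the product. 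Combining this with the functional identity and with $g\circ f\leftrightarrow FG$ gives $\spow{(FG)}{\l}=\spow{F}{\l}\spow{G}{\l}$ entry by entry, which is the asserted matrix homomorphism; as $\l$ was arbitrary, this also yields the semigroup homomorphism at every level.

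Finally, the permanent description suggests an alternative route: one would want the permanent of the $\rI\rJ$ submatrix of $FG$ to equal the sum, over intermediate $\l$-sets, of products of the corresponding permanents of submatrices of $F$ and $G$, a permanent analogue of the Cauchy--Binet formula. I would not take this as the main argument, since that identity is false for general matrices and holds here only because $F$ and $G$ are functional $0$--$1$ matrices; the composition argument makes exactly this structural reason transparent while avoiding any manipulation of permanents. Accordingly I expect no genuine mathematical obstacle: the one substantive point is the propagation-of-collapsing observation, and the only thing demanding care is the bookkeeping of the absorbing empty set together with the matching of composition order to the order of matrix multiplication.
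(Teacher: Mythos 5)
Your proposal is correct and takes essentially the same route as the paper: the paper's proof likewise collapses the sum $\sum_\rK \spow{F}{\l}_{\rI\rK}\spow{G}{\l}_{\rK\rJ}$ to a single term because each row of $\spow{F}{\l}$ has at most one nonzero entry, and uses the same monotonicity $|g(f(\rI))|\le |f(\rI)|\le |\rI|=\l$ to see that collapsing propagates under composition. Your packaging (functional identity first, then transfer to matrices, with the permanent aside correctly set apart) does not change the substance of the argument.
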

\begin{proof}
Consider the $\rI\rJ$ entry in the product $FG$:
$$(\spow{F}{\l}\spow{G}{\l})_{\rI\rJ}=\sum_\rK F_{\rI\rK}G_{\rK\rJ}=F_{\rI f(\rI)}G_{f(\rI)g(f(\rI))}$$
This yields the value one if and only if $\rI$ is $f$-preserved and $f(\rI)$ is $g$-preserved.
While
$$\spow{(FG)}{\l}_{\rI\rJ}=\begin{cases}1,&\text{ if } g(f(\rI))=\rJ\in\P_\l\\ 0,&\text{ otherwise}\end{cases}$$
If $f$ collapses $\rI$, then $|g(f(\rI))|\le |f(\rI)|<|\rI|=\l$. And if $g$ collapses $f(\rI)$, we immediately have $\spow{(FG)}{\l}_{\rI\rJ}=0$.
So we get a 1 precisely as required.
\end{proof}
 
\begin{subsection}{Local inclusion operators}
For layers $\l$ and $m>\l$, define the inclusion operator
$$\EP{E}{\l, m}=\begin{cases}1,&\text{ if } \P_\l\ni\rI\subset\rJ\in\P_m\\ 0,&\text{ otherwise}\end{cases}$$
And, for $\l<m$,
$$\EP{E}{m,\l}=(\EP{E}{\l, m})^\top$$
the $\top$ indicating matrix transpose. In particular,
$$\EP{E}{\l,\l-1}=\begin{cases} 1,& \rI\supset\rJ \text{ and }|\rI\setminus\rJ|=1,|\rI|=\l\\ 0,&\text{ otherwise}\end{cases}$$
It is convenient to write this as a condition, i.e.,  $\chi(\rI\supset\rJ)$ to denote the above entries.
\begin{theorem}\label{thm:linc}
For any function $f$, if $\rI$ is an $f$-preserved set, then, for $\l>1$,
$$(\EP{E}{\l,\l-1}\spow{F}{(\l-1)})_{\rI\rJ}=(\spow{F}{\l}\EP{E}{\l,\l-1})_{\rI\rJ}$$
\end{theorem}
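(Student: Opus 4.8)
The statement is an intertwining relation between the down-inclusion operator $\EP{E}{\l,\l-1}$ and $F$, restricted to $f$-preserved rows, and the plan is to expand each matrix product entrywise and show that both entries reduce to the \emph{same} indicator, using the fact that an $f$-preserved set is mapped bijectively onto its image. First I would record the index sizes: since $(\EP{E}{\l,\l-1})$ sends $\l$-sets to $(\l-1)$-sets and $\spow{F}{(\l-1)}$ acts on $(\l-1)$-sets, on both sides the row label $\rI$ is an $\l$-set and the column label $\rJ$ is an $(\l-1)$-set. Expanding the left-hand side as $\sum_{\rK}(\EP{E}{\l,\l-1})_{\rI\rK}\,\spow{F}{(\l-1)}_{\rK\rJ}$, the inclusion factor restricts the sum to the $\l$ subsets $\rK\subset\rI$ obtained by deleting one element. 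Because $\rI$ is $f$-preserved, $f$ restricts to a bijection of $\rI$ onto $f(\rI)$, so every such $\rK$ is again $f$-preserved (a restriction of an injection is an injection) and no collapsing occurs; hence $\spow{F}{(\l-1)}_{\rK\rJ}=\chi(f(\rK)=\rJ)$, and the left-hand entry equals the number of $(\l-1)$-subsets $\rK\subset\rI$ with $f(\rK)=\rJ$.

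For the right-hand side $\sum_{\rK}\spow{F}{\l}_{\rI\rK}\,(\EP{E}{\l,\l-1})_{\rK\rJ}$, the hypothesis that $\rI$ is $f$-preserved forces a single nonzero term, namely $\rK=f(\rI)$, so this entry is exactly the indicator that $\rJ$ is a one-element deletion of $f(\rI)$, i.e. $\chi(\rJ\subset f(\rI))$ (recall $|\rJ|=\l-1$ and $|f(\rI)|=\l$). To finish I would match this with the left-hand count using the bijection once more: since $f|_\rI$ is a bijection $\rI\to f(\rI)$, it induces a bijection between the $(\l-1)$-subsets of $\rI$ and those of $f(\rI)$. Therefore there is exactly one $\rK\subset\rI$ with $f(\rK)=\rJ$ when $\rJ\subset f(\rI)$, and none otherwise, so the left-hand count is also $\chi(\rJ\subset f(\rI))$, and the two entries agree.

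The step requiring the most care — and the only place the hypothesis is genuinely used — is showing that the left-hand count never exceeds one. This is precisely injectivity of $f$ on $\rI$: distinct $(\l-1)$-subsets of a set on which $f$ is one-to-one have distinct images. The hypothesis cannot be dropped, since if $f$ collapses $\rI$ then row $\rI$ of $\spow{F}{\l}$ is zero (making the right-hand side vanish), yet a proper subset of a collapsed set may still be $f$-preserved and contribute to the left-hand side; so the restriction to $f$-preserved $\rI$ is essential, not a convenience.
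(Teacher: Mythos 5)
Your proof is correct, and its overall skeleton matches the paper's: expand both products entrywise, use the hypothesis to see that row $\rI$ of $\spow{F}{\l}$ has its single nonzero entry in column $f(\rI)$, so the right-hand side equals $\chi(\rJ\subset f(\rI))$, and then show the left-hand side is the same indicator. Where you genuinely depart from the paper is in the one step you correctly flag as critical: bounding the left-hand count by one. You get it from the fact that $f$-preservation makes $f|_{\rI}$ a bijection onto $f(\rI)$, which induces a bijection between the $(\l-1)$-subsets of $\rI$ and those of $f(\rI)$; existence and uniqueness of the $\rK\subset\rI$ with $f(\rK)=\rJ$ (exactly when $\rJ\subset f(\rI)$) are then immediate. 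The paper instead argues by contradiction with a fact special to codimension one: two distinct $(\l-1)$-subsets $\rK_1\ne\rK_2$ of $\rI$ must satisfy $\rK_1\cup\rK_2=\rI$, so if both mapped onto $\rJ$ then $f(\rI)=\rJ\in\P_{\l-1}$ and $\rI$ would not be $f$-preserved. Your bijection lemma buys generality: the identical argument proves $(\EP{E}{\l,m}\spow{F}{m})_{\rI\rJ}=(\spow{F}{\l}\EP{E}{\l,m})_{\rI\rJ}$ for any $m<\l$, whereas the paper's union trick is tied to $m=\l-1$ (which is all that the hierarchy construction of Theorem \ref{thm:hier} needs, since it descends one level at a time); the paper's version, in exchange, stays entirely at the level of the two competing subsets and exhibits the conflict with preservation explicitly. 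Your closing remark on why the hypothesis cannot be dropped is also right, and is exactly what the paper's example $f=[2344]$ displays: the collapsed pair $\{3,4\}$ gives a zero row in $\spow{F}{2}\EP{E}{2,1}$, while $\EP{E}{2,1}F$ has the entry $2$ in that row, coming from the two preserved singletons $\{3\}$ and $\{4\}$ that both map to $\{4\}$.
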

\begin{proof}
We have $\rI$ such that $|f(\rI)|=|\rI|$. Now,
$$(\spow{F}{\l}\EP{E}{\l,\l-1})_{\rI\rJ} =\sum_\rK F_{\rI\rK}\chi(\rK\supset\rJ)=
\begin{cases} 1,& f(\rI)\supset\rJ, |f(\rI)\setminus\rJ|=1 \\ 0,&\text{ otherwise}\end{cases}$$
since $(\spow{F}{\l})_{\rI,*}$ is a row vector with precisely one non-zero entry: $\spow{F}{\l}_{\rI,f(\rI)}$.
Suppose that 
$$(\EP{E}{\l,\l-1}\spow{F}{(\l-1)})_{\rI\rJ}=\sum_\rK \chi(\rI\supset\rK)F_{\rK\rJ}=1\ .$$
Then $\rI\supset\rK \xrightarrow{f}\rJ\in\P_{\l-1}$. Therefore, $\rJ\subset f(\rI)$, with $|\rJ|=\l-1$, so
$(\spow{F}{\l}\EP{E}{\l,\l-1})_{\rI\rJ}=1$.\medskip

Conversely, suppose $(\spow{F}{\l}\EP{E}{\l,\l-1})_{\rI\rJ}=1$. 
That is, $f(\rI)\supset\rJ=f(\rK)$, for some $\rK\subset \rI$. 
Suppose the entry $(\EP{E}{\l,\l-1}\spow{F}{(\l-1)})_{\rI\rJ}$ is greater
than 1. Then there exist subsets $\rK_i$, $i=1, 2$, such that $\rI\supset\rK_i$ and $f(\rK_i)=\rJ$. Now, 
$|\rK_1|=|\rK_2|=\l-1$. Thus, if $\rK_1\ne\rK_2$, it must be that $\rK_1\cup\rK_2=\rI$, otherwise they are each missing the same element.
Then $f(\rI)=\rJ$, with $|\rJ|=\l-1$, so $\rI$ could not be $f$-preserved. We conclude that $(\EP{E}{\l,\l-1}\spow{F}{(\l-1)})_{\rI\rJ}$
is at most one for any $f$-preserved set, $\rI$. Thus, $(\EP{E}{\l,\l-1}\spow{F}{(\l-1)})_{\rI\rJ}=1$ as required.
\end{proof}
\begin{example} We continue the previous example with $f=[2344]$. For $\l=2$, 
$$\EP{E}{2,1}= \left[ \begin {array}{cccc} 1&1&0&0\\1&0&1&0\\
1&0&0&1\\0&1&1&0\\0&1&0&1\\0&0&1&1\end {array}
 \right]$$
and we have

$$\spow{F}{2}\EP{E}{2,1}=
\left[ \begin {array}{cccc} 0&1&1&0\\0&1&0&1\\0&1&0&1\\0&0&1&1\\0&0&1&1\\0&0&0&0\end {array} \right]\,,\quad
\EP{E}{2,1}F= 
\left[ \begin {array}{cccc} 0&1&1&0\\0&1&0&1\\0&1&0&1\\0&0&1&1\\0&0&1&1\\0&0&0&2\end {array} \right]$$
with $\{3,4\}$ collapsing.
\end{example}
\end{subsection}
\begin{subsection}{Vector hierarchy}
We start with some definitions, conventions:\medskip

1. Let $\V$ be the vector space over $\RR$ with orthonormal basis $e_\rI$, as $\rI$ runs through $\P \setminus \emptyset$.
Given $\l\ge1$, $\V_\l$ is the span of $e_\rI$ with $\rI\in\P_\l$. \bigskip

We think of a vector $v\in\V$ as a ``field", $v(\rI)$, defined on $\P \setminus \emptyset$. 
The vector hierarchy relates the various levels of the field $v(\rI)$ in terms of its homogeneous components,
$\displaystyle v_\l=\sum_{|\rI|=\l} v_\l(\rI) e_\rI$, as $\rI$ moves through the layers. \bigskip

2. A vector $v_\l\in\V_\l$ is \textit{$f$-compatible} if $v_\l=\sum_{\rI\in\P_\l}v_\l(\rI) e_\rI$ has ``$f$-preserved support", i.e.,
$v_\l(\rI)\ne0$ implies $\rI$ is $f$-preserved.  In global terms, we could say the field $v(\rI)$ is ``$f$-compatible" if it is
supported on $f$-preserved sets in every layer $\l>0$.\bigskip

Now for some basic observations which are immediate. We denote the rank of $f$, $r=|f(S)|$.  \bigskip

\begin{proposition} Let $f\in\F$ have rank $r$. Then:\medskip

1. $\displaystyle \mathrm{rank\,}(\spow{f}{\l)})=\binom{r}{\l}$. \bigskip

2. $\spow{f}{\l}=0$ if $\l>r$. \bigskip

3. $\rI$ is $f$-preserved if and only if $\rJ$ is $f$-preserved for every non-empty subset $\rJ \subset \rI$.
\end{proposition}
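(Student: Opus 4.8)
The plan is to establish the three parts in the order 3, 2, 1, since the structural fact in part~3 is what makes the other two transparent, and part~1 carries the real content. Throughout I would use the basic reformulation that an $\l$-set $\rI$ is $f$-preserved exactly when $|f(\rI)|=\l$, i.e. when $f$ restricted to $\rI$ is injective. For part~3 the forward direction is immediate: the restriction of an injective map to any subset is again injective, so if $f$ is injective on $\rI$ then it is injective on every non-empty $\rJ\subset\rI$, making each such $\rJ$ $f$-preserved. For the converse I would argue by contraposition. If $\rI$ is not $f$-preserved then $f$ collapses $\rI$, so there are distinct $a,b\in\rI$ with $f(a)=f(b)$; the $2$-subset $\{a,b\}\subset\rI$ is then not $f$-preserved. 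Thus a failure of preservation at $\rI$ forces a failure at a subset, which is precisely the contrapositive of the claim.

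For part~2 I would use a pigeonhole count. If $\l>r=|f(S)|$, then for any $\l$-set $\rI$ we have $f(\rI)\subseteq f(S)$, whence $|f(\rI)|\le r<\l$; so no $\l$-set is $f$-preserved. Since the nonzero entries of $\spow{F}{\l}$ occur only in rows indexed by $f$-preserved sets, every entry vanishes and $\spow{f}{\l}=0$.

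Part~1 is the substantive step and the one where I expect the real work. The key observation is that, by construction, each row of $\spow{F}{\l}$ is either the zero vector (when $\rI$ is collapsed) or the single standard basis vector $e_{f(\rI)}$ (when $\rI$ is $f$-preserved). Consequently the row space is spanned by exactly those $e_\rJ$ that actually appear, and the matrix rank equals the number of distinct sets $\rJ$ arising as $\rJ=f(\rI)$ for some $f$-preserved $\rI$. The plan is therefore to identify this collection of attainable $\rJ$ with the family of $\l$-subsets of the image $f(S)$. One inclusion is clear, since any such $\rJ=f(\rI)$ is an $\l$-subset of $f(S)$. The reverse inclusion is the part I expect to be the main obstacle: I would show every $\l$-subset $\rJ\subseteq f(S)$ is realized by choosing one preimage $s_t\in f^{-1}(t)$ for each $t\in\rJ$, so that $\rI=\{\,s_t:t\in\rJ\,\}$ is an $\l$-set (the $s_t$ are distinct because the $t$ are) with $f(\rI)=\rJ$, hence $f$-preserved and mapping onto $\rJ$.

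Finally, since distinct standard basis vectors are linearly independent, the rank equals the number of $\l$-subsets of an $r$-set, namely $\binom{r}{\l}$. This is consistent with part~2, as $\binom{r}{\l}=0$ whenever $\l>r$, so the rank formula in fact subsumes the vanishing statement.
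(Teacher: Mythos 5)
Your proof is correct and is exactly the argument the paper has in mind: the paper gives no proof at all, introducing these as ``basic observations which are immediate,'' and your reasoning (preservation of $\rI$ means injectivity of $f$ restricted to $\rI$; each row of $\spow{F}{\l}$ is zero or a standard basis vector $e_{f(\rI)}$; every $\l$-subset of $f(S)$ is attained by choosing one preimage per element) is precisely the standard argument that makes them immediate. No gaps --- in particular, your contrapositive via a collapsed pair in part 3 and the realization step in part 1 are exactly the details needed to justify the paper's claim of $\mathrm{rank}\,(\spow{f}{\l})=\binom{r}{\l}$.
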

The hierarchy is constructed by successive applications of the operators $\EP{E}{\l,\l-1}$.
\begin{lemma}\label{lem:comp} Let $v_\l\in\V_\l$ be $f$-compatible. Then $v_{\l-1}=v_\l\,\EP{E}{\l,\l-1}$ is $f$-compatible.
\end{lemma}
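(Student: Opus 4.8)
The plan is to verify $f$-compatibility of $v_{\l-1}$ componentwise, via the contrapositive. First I would write the $\rJ$-component of $v_{\l-1}=v_\l\,\EP{E}{\l,\l-1}$ explicitly. Since $\EP{E}{\l,\l-1}_{\rI\rJ}=\chi(\rI\supset\rJ)$, for $|\rJ|=\l-1$ this gives
$$v_{\l-1}(\rJ)=\sum_{\rI\in\P_\l} v_\l(\rI)\,\EP{E}{\l,\l-1}_{\rI\rJ}=\sum_{\rI\supset\rJ,\ |\rI|=\l} v_\l(\rI),$$
so the value at the $(\l-1)$-set $\rJ$ is the sum of the values of $v_\l$ over exactly the $\l$-sets $\rI$ that contain $\rJ$.

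The aim is then to show that $\rJ$ not $f$-preserved forces this sum to vanish. The decisive ingredient is part 3 of the preceding Proposition: an $\l$-set $\rI$ is $f$-preserved if and only if every non-empty subset of $\rI$ is $f$-preserved. Read contrapositively, if the subset $\rJ\subset\rI$ fails to be $f$-preserved, then $\rI$ itself cannot be $f$-preserved. Hence every index $\rI$ occurring in the sum above --- each of which contains the non-$f$-preserved set $\rJ$ --- is itself non-$f$-preserved.

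Next I would invoke the hypothesis that $v_\l$ is $f$-compatible, i.e., $v_\l(\rI)\ne0$ implies $\rI$ is $f$-preserved; in contrapositive form, $v_\l(\rI)=0$ whenever $\rI$ is not $f$-preserved. Combining this with the previous step, every term of the sum defining $v_{\l-1}(\rJ)$ vanishes, so $v_{\l-1}(\rJ)=0$. Thus $v_{\l-1}(\rJ)\ne0$ implies $\rJ$ is $f$-preserved, which is precisely the assertion that $v_{\l-1}$ is $f$-compatible.

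I do not expect a genuine obstacle here: the argument is a downward propagation of the support condition, powered by the monotonicity of $f$-preservation under passing to subsets. The one point to watch is the direction of inclusion --- $\EP{E}{\l,\l-1}$ descends one layer, so the relevant indices $\rI$ are the \emph{supersets} of $\rJ$, and it is the failure of $f$-preservation at $\rJ$ that spreads upward to all of those supersets, killing every contributing coefficient.
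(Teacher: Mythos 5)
Your proof is correct and is essentially the paper's own argument: both expand $v_{\l-1}$ at an $(\l-1)$-set as the sum of $v_\l$ over its $\l$-element supersets and then invoke part 3 of the preceding Proposition (subsets of $f$-preserved sets are $f$-preserved) together with the support hypothesis on $v_\l$. The only difference is that you phrase the implication contrapositively (non-preservation of $\rJ$ propagates upward and kills every term), while the paper states it directly; the content is identical.
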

\begin{proof} Write
$$v_{\l-1}(\rI)=\sum_\rJ v_\l(\rJ) \,\chi(\rJ \supset \rI) $$
Since $v_\l$ is supported on $f$-preserved sets,  terms in the sum can only be nonzero on sets $\rI$ that are subsets of $f$-preserved sets, hence themselves
$f$-preserved.
\end{proof}
And the main property
\begin{theorem} \label{thm:hier}
Let $v_\l$ be an $f$-compatible left eigenvector of $\spow{F}{\l}$, $v_\l\spow{F}{\l}=\lambda\,v_\l$. Then
$v_{\l-1}=v_\l\,\EP{E}{\l,\l-1}$ is an $f$-compatible left eigenvector of $\spow{F}{(\l-1)}$ with the same eigenvalue $\lambda$.
\end{theorem}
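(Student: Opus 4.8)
The plan is to establish two assertions separately: that $v_{\l-1}$ is $f$-compatible, and that it is a left eigenvector of $\spow{F}{(\l-1)}$ with the same eigenvalue $\lambda$. The first is already handed to me by Lemma~\ref{lem:comp}: since $v_\l$ is $f$-compatible and $v_{\l-1}=v_\l\,\EP{E}{\l,\l-1}$, that lemma gives $f$-compatibility of $v_{\l-1}$ at once. So essentially all the work lies in the eigenvalue claim.

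For the eigenvalue claim I would start from $v_{\l-1}\spow{F}{(\l-1)}=v_\l\,\EP{E}{\l,\l-1}\spow{F}{(\l-1)}$ and try to commute $\spow{F}{(\l-1)}$ to the left of the inclusion operator using the intertwining relation of Theorem~\ref{thm:linc}. The point I expect to be the main obstacle is that Theorem~\ref{thm:linc} is \emph{not} a global matrix identity $\EP{E}{\l,\l-1}\spow{F}{(\l-1)}=\spow{F}{\l}\EP{E}{\l,\l-1}$: it only asserts equality of the $\rI\rJ$ entries when the row index $\rI$ is $f$-preserved. On rows that $f$ collapses the two products genuinely differ, as the accompanying example makes visible (compare the two matrices $\spow{F}{2}\EP{E}{2,1}$ and $\EP{E}{2,1}F$, which disagree precisely on the collapsing row $\{3,4\}$).

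The $f$-compatibility hypothesis is exactly what lets me sidestep this. Expanding the $\rJ$-component, $\big(v_\l\,\EP{E}{\l,\l-1}\spow{F}{(\l-1)}\big)_\rJ=\sum_\rI v_\l(\rI)\,(\EP{E}{\l,\l-1}\spow{F}{(\l-1)})_{\rI\rJ}$, I observe that $v_\l(\rI)=0$ unless $\rI$ is $f$-preserved, so only $f$-preserved rows $\rI$ survive in the sum. On precisely those rows Theorem~\ref{thm:linc} applies entrywise, allowing me to replace each $(\EP{E}{\l,\l-1}\spow{F}{(\l-1)})_{\rI\rJ}$ by $(\spow{F}{\l}\EP{E}{\l,\l-1})_{\rI\rJ}$ without changing the value of the sum. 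This turns the equality of vectors into $v_\l\,\EP{E}{\l,\l-1}\spow{F}{(\l-1)}=v_\l\,\spow{F}{\l}\EP{E}{\l,\l-1}$.

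Finally I would feed in the eigenvector hypothesis $v_\l\spow{F}{\l}=\lambda v_\l$ to rewrite the right-hand side as $\lambda\,v_\l\,\EP{E}{\l,\l-1}=\lambda\,v_{\l-1}$, which is the desired relation $v_{\l-1}\spow{F}{(\l-1)}=\lambda v_{\l-1}$. The argument is thus a short chain of equalities, and the only delicate point is the middle one: Theorem~\ref{thm:linc} must be invoked entrywise on the $f$-preserved support guaranteed by $f$-compatibility, never as a matrix identity.
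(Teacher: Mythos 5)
Your proposal is correct and follows essentially the same route as the paper's own proof: both expand the components of $v_{\l-1}\spow{F}{(\l-1)}$, use $f$-compatibility of $v_\l$ to restrict the sum to $f$-preserved rows, apply Theorem~\ref{thm:linc} entrywise on exactly those rows, and then invoke the eigenvector hypothesis $v_\l\spow{F}{\l}=\lambda v_\l$ to conclude. The delicate point you flag---that Theorem~\ref{thm:linc} holds only entrywise on $f$-preserved rows and not as a matrix identity---is the same caveat the paper records at the end of its proof.
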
 \bigskip

\begin{remark} This says you can move down the hierarchy. Moving up with right eigenvectors does not generally work.
For example, take a stochastic matrix, $F$, at level 1 that collapses \{1,2\}, say. Then the all-ones vector $u$ is a right eigenvector,
which will map to a multiple of an all-ones vector at level 2 by $\EP{E}{2,1}$, while the top row of $\spow{F}{2}$ has all zeros.
\end{remark} \bigskip

\begin{proof}
We noted in Lemma \ref{lem:comp} that $v_{\l-1}$ is $f$-compatible. Now
\begin{align*}
\sum_{\rK} v_{\l-1}(\rK)\spow{F}{(\l-1)}_{\rK\rI}&=\sum_{\rK,\rJ}v_\l(\rK)\EP{E}{\l,\l-1}_{\rK\rJ}\spow{F}{(\l-1)}_{\rJ\rI}\\
&=\sum_{\rK,\rJ}v_\l(\rK)\spow{F}{\l}_{\rK\rJ}\EP{E}{\l,\l-1}_{\rJ\rI} &&\text{ by Theorem \ref{thm:linc}}\\
&=\lambda\,\sum_{\rJ}v_\l(\rJ)\EP{E}{\l,\l-1}_{\rJ\rI}=\lambda v_{\l-1}(\rI)
\end{align*}
The theorem applies since $v_\l$ is $f$-compatible so that only rows,  the $\rK$-indices in the sums, that are $f$-preserved appear with
non-zero factors.
\end{proof}
\begin{corollary}[Hierarchy] \label{cor:hch}
Starting with an $f$-compatible left eigenvector $v_r\in\V_r$, the hierarchy of vectors
$$v_r, v_{r-1}=v_r\EP{E}{r,r-1} ,\ldots,v_\l=v_{\l+1}\EP{E}{\l+1,\l} ,\ldots, v_1 =v_2\EP{E}{2,1}$$
are $f$-compatible left eigenvectors in their respective layers. They are all with the same eigenvalue.
\end{corollary}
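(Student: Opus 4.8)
The plan is a short descending induction on the layer index, applying Theorem \ref{thm:hier} at each step. The statement asserts three things simultaneously at every level: that each $v_\l$ lies in $\V_\l$ and is $f$-compatible, that it is a left eigenvector of $\spow{F}{\l}$, and that the eigenvalue is the same $\lambda$ throughout. The key observation is that the conclusion of Theorem \ref{thm:hier} is exactly the hypothesis needed to apply it again one level down, so a single clean induction carries all three properties at once.

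For the base case $\l=r$, the vector $v_r$ is $f$-compatible and satisfies $v_r\spow{F}{r}=\lambda v_r$ by hypothesis. For the inductive step, I would assume that $v_\l=v_{\l+1}\EP{E}{\l+1,\l}$ has been produced and is an $f$-compatible left eigenvector of $\spow{F}{\l}$ with eigenvalue $\lambda$. Applying Theorem \ref{thm:hier} to this $v_\l$ then yields directly that $v_{\l-1}=v_\l\,\EP{E}{\l,\l-1}$ is $f$-compatible (as already guaranteed by Lemma \ref{lem:comp}) and is a left eigenvector of $\spow{F}{(\l-1)}$ with the \emph{same} eigenvalue $\lambda$. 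This re-establishes the induction hypothesis one level down, and the process continues until $\l=1$.

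There is essentially no genuine obstacle here, since all of the real work has already been carried out in Theorem \ref{thm:hier}; the corollary is just its iteration. The only point worth making explicit is that the eigenvalue is never altered along the chain, so that every member shares the eigenvalue $\lambda$ of the starting vector $v_r$; this is immediate because each application of the theorem transports $\lambda$ unchanged. One should also note that $f$-compatibility is required at each stage precisely so that the commutation identity of Theorem \ref{thm:linc}, on which Theorem \ref{thm:hier} rests, remains available — and this compatibility is exactly what Lemma \ref{lem:comp} supplies inductively.
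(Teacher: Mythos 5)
Your proposal is correct and is exactly the argument the paper intends: the corollary is stated without proof precisely because it is the straightforward descending iteration of Theorem \ref{thm:hier}, with Lemma \ref{lem:comp} preserving $f$-compatibility and the theorem transporting the eigenvalue $\lambda$ unchanged at each step. Your explicit induction, including the remark that the conclusion at level $\l$ is exactly the hypothesis needed at level $\l-1$, fills in the omitted details faithfully.
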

In this sense the ``hierarchy" is a field, $v(\rI)=v_\l(\rI)$, on layer $\l$, defined on $\P\setminus\emptyset$.
\end{subsection}
\begin{subsection}{Inclusion operators and levels}
It is clear that the successive application of the operators $\EP{E}{\l,\l-1}\EP{E}{\l-1,\l-2}$ is equivalent to
the single operator $\EP{E}{\l,\l-2}$, and so on. For example, counting orderings, one checks the following.

\begin{align}\label{eq:eops}
\EP{E}{b,b-1}\EP{E}{b-1,b-2}\cdots \EP{E}{a+1,a}&=(b-a)!\,\EP{E}{b,a}\\
\EP{E}{m,m-1}\EP{E}{m-1,m-2}\cdots \EP{E}{2,1}&=(m-1)!\,\EP{E}{m,1}
\end{align}
Working with vectors, we can always rescale and use successive products or direct inclusion between levels as convenient. \bigskip

An interesting situation to consider is that $\EP{E}{\l,n-\l}$ is a square matrix. In fact, it is invertible.  Following \cite{BA},
define \textit{exclusion operators}, $\EP{\bar E}{\l,m}$ by
$$(\EP{\bar E}{\l,m})_{\rI\rJ}=\begin{cases}1,&\text{ if } \rI\cap\rJ=\emptyset\\ 0,&\text{ otherwise}\end{cases}$$
Then
\begin{proposition}\cite[eq.\,(1)]{BA} \\
For $1\le\l\le \lfloor n/2\rfloor$,
$$
(\EP{E}{\l,n-\l})^{-1}=\sum_{i=0}^\l (-1)^i \binom{n-i-\l}{\l-i}^{-1} \EP{\bar E}{n-\l,i}\EP{E}{i,\l}\ .
$$
\end{proposition}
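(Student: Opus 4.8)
The plan is to verify directly that the claimed matrix is a one-sided inverse; since both factors are finite square matrices of the same size $\binom{n}{\l}$, a one-sided inverse is automatically two-sided, so it suffices to show that $\EP{E}{\l,n-\l}$ multiplied on the right by the proposed sum yields the identity on $\P_\l$. Writing $A=\EP{E}{\l,n-\l}$ and $B=\sum_{i=0}^\l(-1)^i\binom{n-i-\l}{\l-i}^{-1}\EP{\bar E}{n-\l,i}\EP{E}{i,\l}$, I would compute the $\rI\rK$ entry of $AB$ for two $\l$-sets $\rI,\rK$, insert the defining conditions of the inclusion and exclusion operators, and interchange the order of summation so that the innermost sum runs over the $(n-\l)$-sets $\rJ$. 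Here $B_{\rJ\rK}=\sum_i(-1)^i\binom{n-i-\l}{\l-i}^{-1}\sum_{|M|=i}\chi(\rJ\cap M=\emptyset)\,\chi(M\subset\rK)$, with $M$ an $i$-set.

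After the interchange, the innermost quantity is $\sum_\rJ \chi(\rI\subset\rJ)\,\chi(\rJ\cap M=\emptyset)$, namely the number of $(n-\l)$-sets that contain a fixed $\l$-set $\rI$ and are disjoint from a fixed $i$-set $M$. The key combinatorial step is to evaluate this count: it vanishes unless $\rI\cap M=\emptyset$, and when $\rI$ and $M$ are disjoint such a $\rJ$ is obtained by adjoining $n-2\l$ further elements drawn from the $n-\l-i$ elements lying outside $\rI\cup M$, giving $\binom{n-\l-i}{\,n-2\l\,}=\binom{n-i-\l}{\,\l-i\,}$. This is exactly the reciprocal of the weight attached to the $i$-th term of $B$, so the two cancel and each admissible $M$ contributes precisely $(-1)^i$.

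The final step would recognize a clean inclusion--exclusion sum: the surviving subsets $M$ are exactly the $i$-element subsets of $\rK\setminus\rI$, of which there are $\binom{|\rK\setminus\rI|}{i}$, so that $(AB)_{\rI\rK}=\sum_{i\ge0}(-1)^i\binom{|\rK\setminus\rI|}{i}=(1-1)^{|\rK\setminus\rI|}$. This equals $1$ when $\rK\setminus\rI=\emptyset$ and $0$ otherwise; since $|\rI|=|\rK|=\l$, the first case forces $\rK=\rI$, so $AB$ is the identity matrix on $\P_\l$ and $B=A^{-1}$, as claimed.

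I expect the main obstacle to be the bookkeeping in the central counting step: correctly splitting $\rJ$ into the three disjoint blocks $\rI$, $M$, and the ``free'' part, confirming that the free part has size $n-2\l$ chosen from the $n-\l-i$ external elements, and recognizing the identity $\binom{n-\l-i}{n-2\l}=\binom{n-i-\l}{\l-i}$ that makes the weights telescope. I would also pause to check well-definedness: for $1\le\l\le\lfloor n/2\rfloor$ and $0\le i\le\l$ one has $n-i-\l\ge \l-i\ge0$, so every binomial coefficient appearing in $B$, and each reciprocal, is a positive integer and no term is ill-defined.
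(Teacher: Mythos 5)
Your argument is correct, and it is worth noting that the paper itself supplies no proof of this proposition at all: it is quoted from Bapat \cite[eq.\,(1)]{BA}, so your verification is a genuinely self-contained addition rather than a rederivation of something in the text. Each step checks out: the entry $(AB)_{\rI\rK}$ interchanges correctly; the inner count of $(n-\l)$-sets $\rJ$ with $\rI\subset\rJ$, $\rJ\cap M=\emptyset$ is indeed $\chi(\rI\cap M=\emptyset)\binom{n-\l-i}{n-2\l}=\chi(\rI\cap M=\emptyset)\binom{n-i-\l}{\l-i}$, which cancels the weight exactly; the surviving $M$ are the $i$-subsets of $\rK\setminus\rI$; and $\sum_i(-1)^i\binom{|\rK\setminus\rI|}{i}=(1-1)^{|\rK\setminus\rI|}$ collapses to $\delta_{\rI\rK}$ because $|\rK\setminus\rI|\le\l$ guarantees the truncated alternating sum is complete. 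Your dimension count ($\binom{n}{\l}=\binom{n}{n-\l}$, so one-sided inverse suffices) and your well-definedness check ($n-i-\l\ge\l-i\ge0$ on the stated range of $\l$ and $i$) are both sound. One trivial point of bookkeeping: when $i=\l$ the factor $\EP{E}{\l,\l}$ falls outside the paper's definition of inclusion operators (which assumes $m>\l$) and must be read as the identity matrix, which is exactly what your condition $\chi(M\subset\rK)$ with $|M|=|\rK|$ produces. The only related proof in the paper is Proposition \ref{prop:enminusone}, the special case $\l=1$, handled by a different route: a closed-form algebraic identity $(J-I')\bigl(\frac{1}{n-1}J-I'\bigr)=I$ using $J^2=nJ$, $I'^2=I$. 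That method buys a two-line verification but does not generalize, since for $\l>1$ the exclusion operators no longer reduce to a single permutation matrix; your inclusion--exclusion computation is the argument that covers all levels, and as a consistency check it reproduces $\frac{1}{n-1}J-I'$ when $\l=1$ (the $i=0$ term gives $\frac{1}{n-1}J$ and the $i=1$ term gives $-\EP{\bar E}{n-1,1}=-I'$).
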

\begin{example} For $n=4$, we have the inverse pair
$$\EP{E}{1,3}=  \left[ \begin {array}{rrrr} 1&1&1&0\\1&1&0&1
\\1&0&1&1\\0&1&1&1\end {array}
 \right]\,,\qquad (\EP{E}{1,3})^{-1}= \left[ \begin {array}{rrrr} 1/3&1/3&1/3&-2/3\\1/3&1
/3&-2/3&1/3\\1/3&-2/3&1/3&1/3\\-2/
3&1/3&1/3&1/3\end {array} \right]
$$
and for $n=5$,
$$\EP{E}{1,4}=   \left[ \begin {array}{rrrrr} 1&1&1&1&0\\1&1&1&0&1
\\1&1&0&1&1\\1&0&1&1&1
\\0&1&1&1&1\end {array} \right]\,,\quad
(\EP{E}{1,4})^{-1}=\left[ \begin {array}{rrrrr} 1/4&1/4&1/4&1/4&-3/4\\
1/4&1/4&1/4&-3/4&1/4\\1/4&1/4&-3/4&1/4&1/4
\\1/4&-3/4&1/4&1/4&1/4\\-3/4&1/4&1
/4&1/4&1/4\end {array} \right]
$$
\end{example}
Notice the pattern for $\EP{E}{1,n-1}$. Clearly, given $i$, there is only one $(n-1)$-set not containing it. By dictionary ordering,
it occurs on the antidiagonal. We can easily verify the inverse. \bigskip

\begin{proposition}\label{prop:enminusone}
 Let $I$ denote the $n\times n$ identity matrix, $J$, the $n\times n$ all-ones matrix, and
$I'$ the $n\times n$ matrix with 1's on the antidiagonal, zeros elsewhere. Then \medskip

1. $\EP{E}{1,n-1}=J-I'$.  \bigskip

2. $(\EP{E}{1,n-1})^{-1}=\frac{1}{n-1}\,J-I'$.
\end{proposition}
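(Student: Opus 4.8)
The plan is to treat the two parts separately: Part~1 is a direct reading of the entries of $\EP{E}{1,n-1}$ against the dictionary ordering, while Part~2 reduces to a short matrix computation built on three elementary identities for $J$ and $I'$.

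For Part~1 I would first pin down the indexing. The rows of $\EP{E}{1,n-1}$ are the singletons $\{1\},\ldots,\{n\}$, and the columns are the $(n-1)$-sets listed in dictionary order. The key observation, already flagged in the text, is that every $(n-1)$-set is the complement of a unique singleton, and dictionary order \emph{reverses} this correspondence: the $k$-th column is $S\setminus\{n+1-k\}$. One checks this against the table for $n=4$, where the columns $\{1,2,3\},\{1,2,4\},\{1,3,4\},\{2,3,4\}$ exclude $4,3,2,1$ in turn. Then the $(i,k)$ entry equals $1$ iff $\{i\}\subset S\setminus\{n+1-k\}$, i.e.\ iff $i\ne n+1-k$, i.e.\ iff $i+k\ne n+1$. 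Since the antidiagonal is exactly the set of positions with $i+k=n+1$, the matrix carries $1$'s everywhere except on the antidiagonal, which is precisely $J-I'$.

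For Part~2 I would verify the claimed inverse directly. Writing $A=J-I'$ and $B=\frac{1}{n-1}\,J-I'$, I would first record the three identities $J^2=nJ$, $JI'=I'J=J$ (each holds because $I'$ is a permutation matrix, so multiplying $J$ by it preserves every row and column sum), and $(I')^2=I$ (the reversal permutation is an involution). Expanding $BA=\bigl(\tfrac{1}{n-1}J-I'\bigr)(J-I')$ and substituting these gives $\tfrac{n}{n-1}J-\tfrac{1}{n-1}J-J+I$; the three $J$-terms collapse to $\bigl(\tfrac{n-1}{n-1}-1\bigr)J=0$, leaving $BA=I$. Since $A$ and $B$ are square, $BA=I$ already certifies $B=A^{-1}$, so no separate check of $AB=I$ is needed.

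There is no genuine obstacle here. The only point demanding care is getting the dictionary-order/antidiagonal correspondence exactly right in Part~1, so that the excluded singleton lands on the antidiagonal rather than the main diagonal; matching the explicit $n=4$ and $n=5$ tables in the preceding example is a quick safeguard. Everything in Part~2 is routine once the three identities for $J$ and $I'$ are in hand.
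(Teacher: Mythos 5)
Your proof is correct and takes essentially the same route as the paper: Part 1 rests on the observation that the unique $(n-1)$-set excluding a given singleton lands on the antidiagonal under dictionary order, and Part 2 expands the product using the identities $J^2=nJ$, $JI'=I'J=J$, $I'^2=I$ exactly as the paper does, which computes $(J-I')\bigl(J-(n-1)I'\bigr)=(n-1)I$ --- your calculation of $BA=I$ up to scaling and order of factors (immaterial here, since $J$ and $I'$ commute). The extra detail you supply, namely the explicit correspondence sending column $k$ to the complement of $\{n+1-k\}$ and the remark that a one-sided inverse suffices for square matrices, merely fleshes out what the paper leaves implicit.
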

\begin{proof} Note that $JI'=I'J=J$. Now, using $J^2=nJ$, $I'^2=I$, we have
$$(J-I')(J-(n-1)I')=J^2-I'J-(n-1)JI'+(n-1)I=(n-1)I$$
dividing through by $n-1$ yields the result.
\end{proof}
Thus, we have the up-down symmetry of the hierarchies. Combining the general properties with Corollary \ref{cor:hch}, we have
\begin{proposition} \hfill\break \label{prop:nmo}
1. Let $1\le\l\le n/2$. The mappings $v_\l=v_{n-\l}\EP{E}{n-\l,\l}$, $v_{n-\l}=v_\l(\EP{E}{n-\l,\l})^{-1}$,
provide a  1-1 correspondence between $\V_\l$ and $\V_{n-\l}$. \bigskip

2. If $f$ has rank $r>n/2$, then the inclusion maps yield a 1-1 correspondence between eigenvectors in 
layers $\l$ and $n-\l$, for $n-r\le\l\le  n/2$.
\end{proposition}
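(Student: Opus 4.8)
The plan is to dispatch part~(1) directly from invertibility, and then to build part~(2) on top of it by feeding the invertible operator into the hierarchy of Corollary~\ref{cor:hch}.

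For part~(1) I would only observe that $\EP{E}{n-\l,\l}=(\EP{E}{\l,n-\l})^{\top}$, so that its invertibility and its explicit inverse are obtained by transposing the inversion formula quoted from \cite{BA} (the case $\l=1$ being Proposition~\ref{prop:enminusone}). Consequently right-multiplication by $\EP{E}{n-\l,\l}$ and by $(\EP{E}{n-\l,\l})^{-1}$ are mutually inverse linear isomorphisms $\V_{n-\l}\to\V_\l$ and $\V_\l\to\V_{n-\l}$, which is exactly the asserted correspondence of the two spaces.

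For part~(2) I would first note that the hypotheses place both layers at or below the rank: since $r>n/2$ and $n-r\le\l\le n/2$ we have $\l\le n/2\le n-\l\le r$, so $\spow{F}{\l}$ and $\spow{F}{(n-\l)}$ are both nonzero and genuinely support the hierarchy's eigenvectors. By \eqref{eq:eops}, $\EP{E}{n-\l,\l}$ equals the nonzero scalar $\frac{1}{(n-2\l)!}$ times the composite down-operator $\EP{E}{n-\l,n-\l-1}\cdots\EP{E}{\l+1,\l}$. Hence, by Corollary~\ref{cor:hch}, right-multiplication by $\EP{E}{n-\l,\l}$ carries any $f$-compatible left $\lambda$-eigenvector $v_{n-\l}$ of $\spow{F}{(n-\l)}$ to an $f$-compatible left $\lambda$-eigenvector $v_\l=v_{n-\l}\EP{E}{n-\l,\l}$ of $\spow{F}{\l}$; since the operator is invertible by part~(1), this assignment is injective and preserves the eigenvalue.

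It remains to run the correspondence backwards, and this is where I expect the real work to lie. Given an $f$-compatible left $\lambda$-eigenvector $v_\l$, set $v_{n-\l}=v_\l(\EP{E}{n-\l,\l})^{-1}$. The intertwining of Theorem~\ref{thm:linc} is valid only on $f$-preserved rows, so the chain $v_{n-\l}\spow{F}{(n-\l)}\EP{E}{n-\l,\l}=v_{n-\l}\EP{E}{n-\l,\l}\spow{F}{\l}=\lambda\,v_\l$ --- from which $v_{n-\l}\spow{F}{(n-\l)}=\lambda v_{n-\l}$ would follow by cancelling the invertible $\EP{E}{n-\l,\l}$ --- is only licensed once the lift $v_{n-\l}$ is known to be $f$-compatible. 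As the remark following Theorem~\ref{thm:hier} warns, upward motion in the hierarchy is precisely the delicate step, so I expect the $f$-compatibility of this lift to be the main obstacle. The route I would take is to exploit $\lambda\ne0$: iterating $v_\l\spow{F}{\l}=\lambda v_\l$ forces $v_\l$ to be supported on the subsets lying in the eventual image (cyclic core) of $f$, on which $\spow{F}{\l}$ and $\spow{F}{(n-\l)}$ act as honest permutation matrices; there $\EP{E}{n-\l,\l}$ reduces to the inclusion operator internal to the core, and the admissibility range $n-r\le\l\le n/2$ is exactly what keeps layer $n-\l$ below the rank so that the down-hierarchy reaches it. The crux is then to verify that $(\EP{E}{n-\l,\l})^{-1}$ preserves this core support; granting that, $v_{n-\l}$ is $f$-compatible, the cancellation above delivers the backward eigen-relation, and the two displayed maps of part~(1) restrict to the desired $1$--$1$ correspondence of eigenvectors.
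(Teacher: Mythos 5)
Your first two paragraphs are, in fact, the paper's entire argument: the paper states this proposition with only the one-line justification ``combining the general properties with Corollary \ref{cor:hch}.'' Part (1) is invertibility of $\EP{E}{n-\l,\l}$ via the cited inversion formula, and part (2) follows because $n-r\le\l$ forces $n-\l\le r$, so Theorem \ref{thm:hier} (applied repeatedly, as in Corollary \ref{cor:hch}, together with the scalar from \eqref{eq:eops}) shows that right-multiplication by $\EP{E}{n-\l,\l}$ carries $f$-compatible left $\lambda$-eigenvectors of $\spow{F}{(n-\l)}$ to $f$-compatible left $\lambda$-eigenvectors of $\spow{F}{\l}$, injectively by part (1). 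That injection is the whole content of the claim: eigenvectors in layer $n-\l$ correspond one-to-one with their images in layer $\l$, which is exactly what the paper's closing remark (``vectors in a lower layer determine those near the top'') expresses. Had you stopped there, your proof would be complete and would match the paper's.

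Your third paragraph opens a genuine gap, and one that cannot be closed, because the backward statement you set out to prove --- that every $f$-compatible left $\lambda$-eigenvector of $\spow{F}{\l}$ with $\lambda\ne0$ lifts under $(\EP{E}{n-\l,\l})^{-1}$ to an eigenvector of $\spow{F}{(n-\l)}$ --- is false; the ``crux'' you leave unverified does not hold. Take $n=3$ and $f=[221]$, so $r=|f(S)|=2>n/2$ and $\l=1=n-r$ is admissible. The vector $v_1=(0,1,0)$ is $f$-compatible and satisfies $v_1F=v_1$, i.e.\ $\lambda=1$. At layer $2$, with indices $12,13,23$, $f$ collapses $\{1,2\}$ and maps both $\{1,3\}$ and $\{2,3\}$ to $\{1,2\}$, so
$$\spow{F}{2}=\begin{bmatrix}0&0&0\\ 1&0&0\\ 1&0&0\end{bmatrix}$$
which admits no nonzero left eigenvector for any $\lambda\ne0$. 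Concretely, by Proposition \ref{prop:enminusone} (and symmetry of $J-I'$) one has $(\EP{E}{2,1})^{-1}=\frac{1}{2}J-I'$, so the unique preimage of $v_1$ is $v_2=v_1(\EP{E}{2,1})^{-1}=(\frac12,-\frac12,\frac12)$: it charges the collapsed set $\{1,2\}$, hence is not $f$-compatible, and it satisfies $v_2\spow{F}{2}=0\ne\lambda v_2$. The failure is structural, not a defect of your bookkeeping: your ``cyclic core'' is the eventual image $\bigcap_m f^m(S)$, which can be strictly smaller than the rank $r$ (here it is $\{2\}$, of size $1$, while $r=2$); as soon as its cardinality is less than $n-\l$, there are no $(n-\l)$-subsets of the core at all, so a nonzero $\lambda\ne0$ eigenvector can exist at layer $\l$ with no counterpart at layer $n-\l$. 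The proposition must therefore be read, and proved, as the injective correspondence of your second paragraph; the surjectivity argument should be deleted.
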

We have the somewhat surprising fact that vectors in a lower layer determine those near the top, even though the hierarchy is constructed
from top down.
\end{subsection}
\end{section}

\begin{section}{Semigroups and kernels}\label{sec:sgrpskrnls}
A finite semigroup, in general, is a finite set, $\SG$, which is closed under a binary associative operation. 
For any subset $T$ of $\SG$, we will write $\E(T)$ to refer to the set of idempotents in $T$. 
\begin{theorem}\cite[Th. 1.8]{HM}	\label{thm:kernel} 
Let $\SG$ be a finite semigroup. Then $\SG$ contains a minimal ideal $\K$
called the kernel which is a disjoint union of isomorphic groups. 
In fact, $\mathcal{K}$ is isomorphic to $\ms.X. \times \ms.G.  \times \ms.Y.$ where, given $e\in \E(\SG)$, then
$e\mathcal{K}e$ is a group and
\[
\ms.X. =\E( \K e)\,,\qquad  \ms.G. = e\K e\,, \qquad  \ms.Y. = \E(e\K)
\]
and if $\left( x_1,g_1,y_1\right) $, $\left( x_2,g_2,y_2\right)  \in \ms.X. \times \ms.G. \times \ms.Y.$
then the multiplication rule has the form
\[
(x_1, g_1, y_1) (x_2, g_2, y_2) = (x_1, g_1\phi( y_1,x_2) g_2, y_2)
\]
where $\phi\colon \ms.Y.\times\ms.X.\to\ms.G.$ is the \textit{sandwich function}.
\end{theorem}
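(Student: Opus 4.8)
This is the Rees--Suschkewitsch coordinatization theorem, so the plan is to establish, in order, the existence and uniqueness of $\K$, the fact that $\K$ is completely simple, and finally the explicit product formula read off from the ``egg-box'' structure. First I would construct the kernel: in a finite semigroup the product of two two-sided ideals lies in their intersection, so the family of ideals is closed under intersection and, by finiteness, has a least member $\K$, which is then the unique minimal ideal and is simple as a semigroup. Since $\SG$ is finite, some power of every element is idempotent, so $\E(\K)\neq\emptyset$; a finite simple semigroup carries primitive idempotents, and hence $\K$ is completely simple and, in particular, a disjoint union of its maximal subgroups.

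Next I would bring in Green's relations on $\K$. Complete simplicity forces $\K$ to be a single $\mathcal{D}$-class whose $\mathcal{H}$-classes are mutually isomorphic groups; picturing $\mathcal{R}$-classes as rows and $\mathcal{L}$-classes as columns, each cell $R\cap L$ is a group. Fix $e\in\E(\K)$. I would then identify the three coordinate sets with this picture: $\K e$ is exactly the $\mathcal{L}$-class $L_e$ and meets each row in a single idempotent, so $\ms.X.=\E(\K e)$ indexes the rows; dually $e\K=R_e$ yields one idempotent per column, so $\ms.Y.=\E(e\K)$ indexes the columns; and $\ms.G.=e\K e=H_e$ is the group. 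The coordinatizing map sends $a\in\K$ to $(x,g,y)$, where $x\in\ms.X.$ is the unique idempotent with $x\,\mathcal{R}\,a$, $y\in\ms.Y.$ the unique one with $y\,\mathcal{L}\,a$, and $g=eae\in\ms.G.$. Bijectivity is immediate: each cell is a single $\mathcal{H}$-class, so $(x,y)$ pins down the cell and, since $a\mapsto eae$ restricts to left-and-right translation on that group, $g$ pins down the element within it.

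To obtain the multiplication rule I would use the defining features of a completely simple semigroup: for all $a,b\in\K$ one has $ab\,\mathcal{R}\,a$ and $ab\,\mathcal{L}\,b$, while an idempotent is a right identity along its $\mathcal{L}$-class and a left identity along its $\mathcal{R}$-class, so $a\,y=a$ when $y\,\mathcal{L}\,a$ and $x\,b=b$ when $x\,\mathcal{R}\,b$. Writing $a_i\leftrightarrow(x_i,g_i,y_i)$, the first pair of facts shows $a_1a_2$ sits in row $x_1$ and column $y_2$, giving outer coordinates $x_1$ and $y_2$. For the middle coordinate I would use $a_1=a_1y_1$ and $a_2=x_2a_2$ together with $ey_1=y_1$ and $x_2e=x_2$ to write
\[
e\,a_1a_2\,e=e\,a_1y_1x_2a_2\,e=(ea_1e)\,(y_1x_2)\,(ea_2e)=g_1\,(y_1x_2)\,g_2,
\]
where $\phi(y_1,x_2):=y_1x_2$ automatically lands in $\ms.G.$ because $y_1\in e\K$ and $x_2\in\K e$ force $y_1x_2=e(y_1x_2)e\in e\K e$. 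This exhibits the sandwich function $\phi\colon\ms.Y.\times\ms.X.\to\ms.G.$, $\phi(y,x)=yx$, and the product is $(x_1,\,g_1\phi(y_1,x_2)g_2,\,y_2)$ as claimed.

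The main obstacle is the middle coordinate: one must verify that $e\,a_1a_2\,e$ genuinely factors as $g_1\phi(y_1,x_2)g_2$ with all row and column data determined solely by $x_1$ and $y_2$, and that $\phi$ depends only on the idempotents $y_1,x_2$ rather than on the representatives $a_1,a_2$. This rests on Green's lemma --- that the translations $a\mapsto a_1a$ and $a\mapsto aa_2$ restrict to bijections between $\mathcal{H}$-classes --- together with the one-sided identity properties of the idempotents; the careful bookkeeping of which idempotent occupies which cell is the delicate part, whereas the remaining steps (existence of $\K$, complete simplicity, and the bijections defining $\ms.X.$, $\ms.Y.$, $\ms.G.$) are routine once Green's structure on $\K$ is in hand.
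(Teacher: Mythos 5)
The paper itself gives no proof of this theorem: it is imported as background with the citation \cite[Th.~1.8]{HM}, so there is no in-paper argument to compare yours against. Judged on its own merits, your proof is correct and is the standard Rees--Suschkewitsch coordinatization: the product-of-ideals argument gives a unique minimal ideal $\K$, which is simple; finiteness supplies idempotents and primitive ones, so $\K$ is completely simple; Green's relations give the egg-box picture with $\K e=L_e$, $e\K=R_e$, $e\K e=H_e$; and your computation $e\,a_1a_2\,e=(ea_1e)(y_1x_2)(ea_2e)$, justified by $ey_1=y_1$, $x_2e=x_2$, $a_1y_1=a_1$, $x_2a_2=a_2$, produces exactly the sandwich function $\phi(y,x)=yx\in e\K e$. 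Your bijectivity claim, though terse, is sound and can be made completely explicit without invoking Green's lemma: the inverse map is $(x,g,y)\mapsto xgy$, since the identities $xe=x$, $ey=y$, $xa=a$, $ay=a$ give $x(eae)y=a$ and $e(xgy)e=g$.

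One substantive point is worth flagging, and it is in your favor. You fix $e\in\E(\K)$, whereas the statement as transcribed in the paper takes $e\in\E(\SG)$. The literal statement is false in that generality: in the full transformation monoid on two points, the kernel consists of the two constant maps and is not a group, yet taking $e$ to be the identity of the monoid gives $e\K e=\K$. So ``$e\mathcal{K}e$ is a group'' genuinely requires $e$ to lie in the kernel, which is the hypothesis your proof actually uses (and what \cite{HM} intends). Your silent correction of the hypothesis is appropriate rather than a gap; the remaining steps you gloss (simplicity of $\K$ via $\K J\K=\K$ for an ideal $J$ of $\K$, and the one-idempotent-per-row/column counts) are routine.
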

The product structure $\ms.X. \times \ms.G. \times \ms.Y.$ is called a Rees product and any semigroup that has 
a Rees product is \textit{completely simple}. 
The kernel of a finite semigroup is always completely simple.

\subsection{Kernel of a matrix semigroup}  \label{ssec:kms}

An extremely useful characterization of the kernel for a semigroup of matrices is known.
\begin{theorem}	\cite[Props. 1.11, 1.12]{HM}	\label{thm:minrank}  
Let $\SG$ be a finite semigroup of matrices. Then the kernel, $\K$, of $\SG$ is the set of matrices with minimal rank.
\end{theorem}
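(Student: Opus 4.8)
The plan is to set $\rho=\min\{\mathrm{rank}(M):M\in\SG\}$ and $R=\{M\in\SG:\mathrm{rank}(M)=\rho\}$, and to prove the two inclusions $\K\subseteq R$ and $R\subseteq\K$ separately. The first inclusion is easy and rests only on submultiplicativity of rank. For matrices $\mathrm{rank}(AB)\le\min(\mathrm{rank}(A),\mathrm{rank}(B))$, so if $M\in R$ and $N\in\SG$ then $\mathrm{rank}(MN)\le\rho$ and $\mathrm{rank}(NM)\le\rho$; as $\rho$ is the global minimum these are equalities, whence $MN,NM\in R$ and $R$ is a two-sided ideal. Since by Theorem~\ref{thm:kernel} the kernel $\K$ is the minimal ideal, it is contained in every ideal, and in particular $\K\subseteq R$.

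It remains to show $R\subseteq\K$, and the cleanest route is to prove that $R$ is itself a \emph{minimal} ideal: once that is known, the containment of the ideal $\K$ inside $R$ together with minimality of $R$ forces $\K=R$. Thus the real content is to show that any two elements $M,N\in R$ generate the same principal ideal, i.e.\ that $M=XNY$ for suitable $X,Y\in\SG^1$.

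The tool I would use is a rigidity forced by minimality of the rank. For $A,B\in R$ the product $AB$ lies in $R$, so the always-valid inclusions $\mathrm{Col}(AB)\subseteq\mathrm{Col}(A)$ and $\mathrm{Row}(AB)\subseteq\mathrm{Row}(B)$ become equalities, their two sides having the same dimension $\rho$. Hence right multiplication preserves the column space of the left factor and left multiplication preserves the row space of the right factor, so that the pair $(\mathrm{Col}(M),\mathrm{Row}(M))$ plays the role of Rees coordinates on $R$. Finiteness supplies idempotents: each cyclic subsemigroup $\langle M\rangle$ meets $R$ in an idempotent $e$, and every element of $e\SG e$ shares the spaces $\mathrm{Col}(e),\mathrm{Row}(e)$ and restricts to an invertible map of the $\rho$-dimensional space $\mathrm{Col}(e)$, so $e\SG e$ is a finite group with identity $e$. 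These idempotents act as local identities, and by pre- and post-multiplying $N$ by suitable elements one can match its column and row spaces to those of $M$ and reconstruct $M=XNY$. This exhibits $R$ as completely simple, hence minimal, and the proof is complete.

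The step I expect to be the main obstacle is exactly this reconstruction. Because the rank is constant throughout $R$, it cannot by itself distinguish $\mathcal{J}$-classes, so the crude comparison that settled the first inclusion gives no leverage here; one is forced to track the finer invariants $\mathrm{Col}$ and $\mathrm{Row}$. Verifying carefully that the relevant $\mathcal{H}$-classes are groups and that multipliers exist realizing any prescribed change of column and row space amounts to re-deriving, for $R$, the completely simple structure of Theorem~\ref{thm:kernel}; turning the informal idea of adjusting the spaces by idempotents into precise equalities $M=XNY$ is where the genuine work lies.
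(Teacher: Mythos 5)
The paper itself gives no proof of this theorem---it is quoted from \cite[Props.~1.11, 1.12]{HM}---so your proposal has to be judged against the standard argument. Your first inclusion is complete and correct: submultiplicativity of rank makes $R$ a two-sided ideal, and the kernel, being the minimum ideal of a finite semigroup, is contained in every ideal, so $\K\subseteq R$. Your second half correctly identifies the route (show $R$ is a minimal ideal via rank rigidity, idempotents from cyclic subsemigroups, and the local groups $e\SG e$), but as written it is not a proof: you announce the factorization $M=XNY$ and then explicitly defer it as ``where the genuine work lies.'' That deferred step is precisely the content of the theorem, so the proposal has a genuine gap.

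The good news is that the gap closes in a few lines from ingredients you already have; no separate ``matching'' of column and row spaces is needed. First sharpen your idempotent claim: for $M\in R$ all powers $M^n$ lie in $R$, so $\mathrm{Col}(M)=\mathrm{Col}(M^2)=\cdots=:V$ and $M$ restricts to an invertible map of $V$; if $p$ is the order of that restriction, then $M^p=:e$ is idempotent and moreover $eM=Me=M^{p+1}=M$ (check on any vector $x$: $M^{p+1}x=M^p(Mx)=Mx$ since $Mx\in V$ and $M^p|_V=\mathrm{id}$). So $e$ is a two-sided identity \emph{for $M$ itself}, not merely an idempotent power. Now take any $N\in R$. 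Every element of $e\SG e$ has rank at most $\mathrm{rank}(e)=\rho$ and at least $\rho$, so $e\SG e\subseteq R$, and by your own restriction argument $e\SG e$ is a finite group with identity $e$. Let $g\in e\SG e$ be the group inverse of $eNe$, so that $g\,eNe=e$. Then
$$M=Me=M\,(g\,eNe)=(Mge)\,N\,e\in \SG N\SG\ .$$
Since $M\in R$ was arbitrary, $R\subseteq \SG^1N\SG^1$, while $\SG^1N\SG^1\subseteq R$ by the rank argument; hence every element of $R$ generates $R$ as an ideal, so no ideal of $\SG$ is properly contained in $R$. As $\K$ is an ideal contained in $R$, this forces $\K=R$. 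With this paragraph inserted, your outline becomes a complete and correct proof along the standard lines.
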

Suppose $\SG=\SG((C_1,C_2,\ldots,C_d))$ is a semigroup generated by binary stochastic matrices $C_i$.
Then $\SG$ is a finite semigroup with kernel $\mathcal{K} = \ms.X. \times \ms.G. \times \ms.Y.$, the Rees product structure of
Theorem \ref{thm:kernel}. Let $k$, $k'$ be elements of $\mathcal{K}$. 
Then with respect to the Rees product structure $k=\left( k_1,k_2,k_3\right) $ and $k'=\left( k'_1,k'_2 ,k'_3\right) $. 
We form the ideals $\mathcal{K}k$, $k'\mathcal{K}$, and $k'\mathcal{K}k$: \bigskip

\begin{enumerate}
\item $\mathcal{K}k = \ms.X. \times \ms.G. \times \left\{k_3\right\}$ is a minimal left ideal in $\mathcal{K}$ whose elements all have the same range, or nonzero columns as $k$. 
We call this type of semigroup a \textit{left group}.  \bigskip

\item $k'\mathcal{K} = \left\{k'_1\right\} \times \ms.G. \times \ms.Y.$ is a minimal right ideal in $\mathcal{K}$ whose elements all have the same partition of the vertices as $k'$. 
A block $\B_j$ in the partition can be assigned to each nonzero column of $k'$ by 
\[
\B_j =\{ i : k'_{i,j}=1\}
\]
A semigroup with the structure $k'\mathcal{K}$ is a \textit{right group}.  \bigskip

\item $k'\mathcal{K}k$ is the intersection of $k'\mathcal{K}$ and $\mathcal{K}k$. It is a maximal group in $\mathcal{K}$ 
(an $H$-class in the language of semigroups). It is best thought of as the set of functions specified by the partition of $k'$ and the range of $k$,
acting as a group of permutations on the range of $k$.
The idempotent of $k'\mathcal{K}k$ is the function which is the identity when restricted to the range of $k$.
\end{enumerate}
\subsection{Probability measures on finite semigroups}
Since we have a discrete finite set, a probability measure is given as a function on the elements. 
For matrices, the semigroup algebra is the algebra generated by the elements $w\in\SG$. In general, we consider formal
sums $\sum f(w)\,w$. The function $\mu$ defines a probability measure if 
$$0\le\mu(w)\le1\,,\forall w\in\SG\,, \text{ and  } \sum \mu(w)=1\ .$$
The corresponding element of the semigroup algebra is thus $\sum \mu(w)\,w$. The product of elements in the semigroup algebra yields
the convolution of the coefficient functions. Thus, for the convolution of two measures 
$\mu_1$ and $\mu_2$ we have
\begin{align*}
\sum_{w\in\SG} \mu_1*\mu_2(w)\,w&=\bigl(\sum_{w\in\SG} \mu_1(w)\,w\bigr)\bigl(\sum_{w'\in\SG} \mu_2(w')\,w'\bigr)\\
&=\sum_{w,w'\in\SG} \mu_1(w)\mu_2(w')\,ww'
\end{align*}
Hence the convolution powers, $\mu^{(n)}$ of a single measure $\mu=\mu^{(1)}$ satisfy
$$\sum_{w\in\SG} \mu^{(n)}(w)\,w=\bigl(\sum_{w\in\SG} \mu^{(1)}(w)\,w\bigr)^n$$
in the semigroup algebra. \bigskip

\begin{subsubsection}{\sl Invariant measures on the kernel}
We can consider the set of probability measures with support the kernel of a finite semigroup of matrices. Given such a measure $\mu$, it is 
\textit{idempotent} if $\mu*\mu=\mu$, i.e., it is idempotent with respect to convolution. An idempotent measure on a finite group must be the uniform distribution
on the group, its Haar measure. In general, we have 
\begin{theorem}\cite[Th. 2.8]{HM} \label{thm:idem}
An idempotent measure $\mu$ on a finite semigroup $\SG$ is supported on a completely simple subsemigroup, $\K'$ of $\SG$.
With Rees product decomposition $\K'=\ms.X'.\times\ms.G'.\times\ms.Y'.$, $\mu$ is a direct product of the 
form $\alpha\times\omega\times\beta$, where $\alpha$ is a measure on $\ms.X',.$ $\omega$ is Haar measure on $\ms.G',.$ and
$\beta$ is a measure on $\ms.Y'.$.
\end{theorem}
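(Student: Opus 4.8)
The plan is to first pin down the support as a completely simple semigroup and then to decompose $\mu$ coordinate-by-coordinate in the Rees structure. Let $S=\mathrm{supp}(\mu)$. Since all coefficients are nonnegative, convolution gives $\mathrm{supp}(\mu*\mu)=S\cdot S$, and $\mu*\mu=\mu$ forces $S\cdot S=S$, so $S$ is a finite subsemigroup. Let $\K'$ be its kernel, which by Theorem \ref{thm:kernel} is completely simple; I would show $S=\K'$. Writing $c=\mu(\K')$ and noting $c>0$ because every point of $S$ carries positive mass, I use that $\K'$ is an ideal, so $uv\in\K'$ whenever $u\in\K'$ or $v\in\K'$. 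With $u,v$ independent of law $\mu$ this gives $c=\mu^{(2)}(\K')\ge 1-(1-c)^2=2c-c^2$, hence $c(1-c)\le 0$ and therefore $c=1$. Thus $\mu$ is supported on $\K'$, forcing $S=\K'$. We may then identify $S=\K'$ with its Rees decomposition $X'\times G'\times Y'$, with sandwich function $\phi$, and $\mu$ has full support: $\mu(x,g,y)>0$ for every triple.

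Next I would factor the $(X',Y')$-marginal. The projection $p(x,g,y)=(x,y)$ is a semigroup homomorphism of $\K'$ onto the rectangular band $X'\times Y'$, in which $(x_1,y_1)(x_2,y_2)=(x_1,y_2)$, because the first and last coordinates of a Rees product are inherited from the left and right factor respectively. Hence $p_*\mu$ is idempotent on the band, and a short computation shows that any idempotent measure on a rectangular band equals the product of its two marginals. This yields $\sum_{g}\mu(x,g,y)=\alpha(x)\beta(y)$, where $\alpha$ and $\beta$ are the $X'$- and $Y'$-marginals of $\mu$.

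It then remains to show the conditional law of the group coordinate is Haar and independent of $(x,y)$. I would set $\lambda_{x,y}(g)=\mu(x,g,y)/(\alpha(x)\beta(y))$, which is well defined and fully supported on $G'$ by the previous steps. Substituting the factorization into $\mu*\mu=\mu$ and reading off the $G'$-coordinate gives, with $\delta$ denoting point mass and $*$ group convolution, the fixed-point relation
\[
\lambda_{x,y}=\sum_{y_1,x_2}\beta(y_1)\alpha(x_2)\,\lambda_{x,y_1}*\delta_{\phi(y_1,x_2)}*\lambda_{x_2,y}.
\]
Applying an irreducible representation $\rho$ of $G'$ and setting $A_{x,y}=\sum_g\lambda_{x,y}(g)\rho(g)$ turns this into $A_{x,y}=\sum_{y_1,x_2}\beta(y_1)\alpha(x_2)\,A_{x,y_1}\rho(\phi(y_1,x_2))A_{x_2,y}$. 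Since $\rho$ is unitary each $A_{x,y}$ is a contraction; as the weights $\beta(y_1)\alpha(x_2)$ sum to $1$, the maximum $M=\max_{x,y}\|A_{x,y}\|$ satisfies $M\le M^2$, so $M\in\{0,1\}$. For the trivial representation $A_{x,y}=1$ automatically; I would show $M=0$ for every nontrivial $\rho$, whence $\widehat{\lambda_{x,y}}(\rho)=0$ for all nontrivial $\rho$, and Fourier inversion identifies each $\lambda_{x,y}$ with Haar measure $\omega$, independent of $(x,y)$. Combining the three reductions gives $\mu(x,g,y)=\alpha(x)\,\omega(g)\,\beta(y)$.

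The hard part will be excluding $M=1$ for nontrivial $\rho$. Here I would work at an extremal pair $(x^\ast,y^\ast)$ with $\|A_{x^\ast,y^\ast}\|=1$, pick a unit vector $v$ at which the norm is attained, and use strict convexity of the Hilbert-space norm: equality propagates through the convex combination and forces $\rho(g)v=\rho(g')v$ for all $g,g'$ in the support of some $\lambda_{x,y}$. Because that support is all of $G'$, the differences $g^{-1}g'$ generate $G'$, so $v$ would be a nonzero $G'$-invariant vector for $\rho$, contradicting irreducibility and nontriviality. As a consistency check, the forward direction, that $\alpha\times\omega\times\beta$ is genuinely idempotent, follows at once from translation invariance of $\omega$, since $\omega*\delta_s*\omega=\omega$ absorbs every sandwich factor.
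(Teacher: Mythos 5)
Your proposal is correct, but there is no in-paper proof to measure it against: Theorem \ref{thm:idem} is imported from H\"ogn\"as--Mukherjea \cite[Th.~2.8]{HM} as background and is never proved in the paper; it is only invoked once, at the end of Appendix \ref{app:1}, to finish the proof of the Ces\`aro limit theorem. So your argument stands on its own, and it does stand. Step (i) is sound: supports multiply under convolution on a finite semigroup, so $S=\mathrm{supp}(\mu)$ is a subsemigroup; and since the kernel $\mathcal{K}'$ of $S$ is a two-sided ideal, $\mu^{(2)}(\mathcal{K}')\ge 1-(1-c)^2$ with $\mu^{(2)}=\mu$ forces $c=\mu(\mathcal{K}')=1$, so $S=\mathcal{K}'$ is completely simple (Theorem \ref{thm:kernel}) and $\mu$ has full support on it. Step (ii) is sound because on a rectangular band \emph{any} convolution of two measures is the product of the first factor's $X'$-marginal with the second factor's $Y'$-marginal, so idempotence alone gives $\sum_g\mu(x,g,y)=\alpha(x)\beta(y)$. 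Step (iii) is the right bookkeeping of the sandwich multiplication, and the representation-theoretic ending works; in fact you can streamline it: since every $\lambda_{x,y}$ has full support, strict convexity of the Hilbert norm already gives $\|A_{x,y}\|<1$ for every nontrivial irreducible $\rho$, hence $M<1$, and the inequality $M\le M^2$ then forces $M=0$ directly, with no need to propagate equality through the extremal pair. As for what the two routes buy: the source \cite{HM} proves the theorem in a topological-semigroup setting, handling the group factor through the classical fact that an idempotent probability on a compact group is Haar measure on a closed subgroup; your Fourier-analytic argument exploits finiteness (finitely many irreducibles, full support, strict convexity) and in exchange is elementary and self-contained, and it delivers slightly more than the statement asks --- namely that the completely simple carrier is exactly $\mathrm{supp}(\mu)$ and that $\omega$ is Haar on the entire group factor $G'$.
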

In our context, we will have the measure $\mu$ supported on the kernel $\K$ of $\SG$.
We identify $\ms.X.$ with the partitions of the kernel and $\ms.Y.$ with the ranges. The local groups are mutually isomorphic finite
groups with the Haar measure assigning $1/|G|$ to each element of the local group $G$. Thus, if $k\in\K$ has Rees product decomposition
$(k_1,k_2,k_3)$ we have
$$\mu(k)=\alpha(k_1)\beta(k_3)/|G|$$
where $|G|$ is the common cardinality of the local groups, $\alpha(k_1)$ is the $\alpha$-measure of the partition of $k$, and $\beta(k_3)$ is the
$\beta$-measure of the range of $k$.
\end{subsubsection} \bigskip

\begin{example} Here is an example with $n=6$. Take two functions $r=[451314]$, $b=[245631]$ and generate the semigroup $\SG$.
We find the elements of minimal rank, in this case it is 3.
The structure of the kernel is summarized in  a table with rows labelled by the partitions and columns labelled by the range classes. The entry is
the idempotent with the given partition and range. It is the identity for the local group of matrices with the given partition and range.
\begin{equation*}\label{eq:k6}
\begin{array}{c|c|c|c|c|}
&\{1, 3, 4\}&\{1, 4, 5\}&\{2, 3, 6\}&\{2, 5, 6\}\vstrut\cr
\hline
\{\{1, 2\}, \{3, 5\},\{4, 6\} \}& [1 1 3 4 3 4]& [1 1 5 4 5 4]&[2 2 3 6 3 6]&[2 2 5 6 5 6] \vstrut\cr
\hline
\{\{1, 6\},\{2, 4\},  \{3, 5\}\}& [1 4 3 4 3 1]&[1 4 5 4 5 1]&[6 2 3 2 3 6]& [6 2 5 2 5 6] \vstrut\cr
\hline
\end{array}
\end{equation*}
The kernel has 48 elements, the local groups being isomorphic to the symmetric group $S_3$.  \bigskip

Each cell consists of functions with the given partition, acting as permutations on the range class. They are given
by matrices whose nonzero columns are in the positions labelled by the range class. The entries in a nonzero column are in rows
labelled by the elements comprising the block of the partition mapping into the element labelling the column.
We label the partitions 
$$\P_1=\{\{1, 2\}, \{3, 5\},\{4, 6\} \}\,,\qquad \P_2=\{\{1, 6\},\{2, 4\},  \{3, 5\}\}$$
and the range classes
$$\R_1=\{1, 3, 4\}\,,\quad \R_2=\{1, 4, 5\}\,,\quad \R_3=\{2, 3, 6\}\,,\quad \R_4=\{2, 5, 6\}\ .$$
The local group with partition $\P_1$ and range $\R_3$, for example, are the idempotent $[2 2 3 6 3 6]$ noted in the above table and the functions
$$\{\,[6 6 3 2 3 2], [6 6 2 3 2 3], [3 3 6 2 6 2], [3 3 2 6 2 6], [2 2 6 3 6 3]\,\}$$
isomorphic to $S_3$ acting on the range class $\{2,3,6\}$. For the function $ [3 3 2 6 2 6]$, in the matrix semigroup we have correspondence
$$  [3 3 2 6 2 6]\  \longleftrightarrow\  \begin{bmatrix}
0& 0& 1& 0& 0& 0\\0& 0& 1& 0& 0& 0\\ 0& 1& 0& 0& 0& 0\\0& 0& 0& 0& 0& 1\\0& 1& 0& 0& 0& 0\\0& 0& 0& 0& 0& 1
\end{bmatrix}\ .
$$
\bigskip

The measure on the partitions is
$$\alpha=[2/3,1/3]$$
while that on the ranges is
$$\beta=[4/9,2/9,1/9,2/9]$$
as required by invariance.
\end{example} 
\end{section}

\begin{section}{Graphs, semigroups, and dynamical systems}
We start with a regular $d$-out directed graph on $n$ vertices with adjacency matrix $\A$. 
Number the vertices once and for all and identify vertex $i$ with $i$ and vice versa. \bigskip

Form the stochastic matrix $A=d^{-1}\,\A$.  We assume that $A$ is irreducible and aperiodic.
In other words, the graph is strongly connected and the limit
$$\lim_{m\to\infty}A^m=\Omega$$
exists and is a stochastic matrix with identical rows, the invariant distribution for the corresponding Markov chain, which we denote
by $\pi=[p_1,\ldots,p_n]$. The limiting matrix satisfies
$$\Omega=A\Omega=\Omega A=\Omega^2$$
so that the rows and columns are fixed by $A$, eigenvectors with eigenvalue 1. \bigskip

Decompose $A=\frac{1}{d} C_1+\cdots +\frac{1}{d} C_d$, into binary stochastic matrices, 
``colors", corresponding to the $d$ choices moving from a given vertex to another. Each coloring matrix $C_i$ is the matrix
of a function $f\in\F$ on the vertices as in the discussion in \S2. Let
$\SG=\SG((C_1,C_2,\ldots,C_d))$ be the semigroup generated by the matrices $C_i$, $1\le i\le d$. Then
we may write the decomposition of $A$ into colors in the form
$$A=\mu^{(1)}(1)C_1+\cdots +\mu^{(1)}(d) C_d=\sum_{w\in\SG} \mu^{(1)}(w)\,w$$
with $\mu^{(1)}$ a probability measure on $\SG$, thinking of the elements of $\SG$ as words $w$, strings from the alphabet generated
by $\{C_1,\ldots,C_d\}$. We have seen in the previous section that
\begin{equation}\label{eq:ces}
A^m=\sum_{w\in\SG}\mu^{(m)}(w)\,w
\end{equation}
where $\mu^{(m)}$ is the $m^{\rm th}$ convolution power of the measure $\mu^{(1)}$ on $\SG$. \bigskip

The main limit theorem is the following
\begin{theorem}\cite[Th. 2.13]{HM} Let the support of $\mu^{(1)}$ generate the semigroup $\SG$. Then the Ces\`aro limit
$$\lambda(w)=\lim_{N\to\infty} \frac{1}{N}\,\sum_{m=1}^N \mu^{(m)}(w)$$
exists for each $w\in\SG$. The measure $\lambda$ is concentrated on $\K$, the kernel of the semigroup $\SG$. It has the canonical decomposition
$$\lambda=\alpha \times \omega \times \beta$$
corresponding to the Rees product decomposition of $X\times G\times Y$ of $\K$.
\end{theorem}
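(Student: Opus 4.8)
The plan is to work in the finite-dimensional semigroup algebra $\RR[\SG]$, identifying a measure $\nu$ with the element $\sum_w \nu(w)\,w$ so that convolution becomes the algebra product and $\mu^{(m)}$ corresponds to $P^m$, where $P=\sum_w\mu^{(1)}(w)\,w$, exactly as in the identity \eqref{eq:ces}. The existence of the Ces\`aro limit then becomes a statement about the averages $\frac1N\sum_{m=1}^N P^m=\frac1N\sum_{m=0}^{N-1}T^m(P)$, where $T$ is left multiplication by $P$. First I would observe that every $P^m$ is a probability measure, so its coefficients are nonnegative and sum to $1$; hence the orbit $\{P^m\}=\{T^m(P)\}$ is bounded in $\RR[\SG]$. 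Decomposing $P$ along the generalized eigenspaces of $T$, boundedness of the orbit forces the component of $P$ in every generalized eigenspace with eigenvalue $|z|>1$, and in every Jordan block of size $\ge2$ attached to a unit-modulus eigenvalue, to vanish, since these would grow geometrically or polynomially. On what remains the averages converge term by term: the $z=1$ eigenspace is fixed, the $|z|<1$ part tends to $0$, and for a unit eigenvalue $z\ne1$ one has $\frac1N\sum_m z^m\to0$. This mean-ergodic argument yields the limit $\lambda$.

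Since each average $\frac1N\sum_{m=1}^N\mu^{(m)}$ is itself a probability measure and the simplex of probability measures on the finite set $\SG$ is closed, the limit $\lambda$ is again a probability measure. To see it is idempotent and absorbing, I would compute $\mu*\bigl(\frac1N\sum_{m=1}^N\mu^{(m)}\bigr)=\frac1N\sum_{m=1}^N\mu^{(m+1)}$, which differs from $\frac1N\sum_{m=1}^N\mu^{(m)}$ by $\frac1N(\mu^{(N+1)}-\mu^{(1)})$; letting $N\to\infty$ gives $\mu*\lambda=\lambda$, and symmetrically $\lambda*\mu=\lambda$. Since $\lambda*\mu^{(m)}=\lambda$ for all $m$ by induction, averaging gives $\lambda*\lambda=\lambda$, so $\lambda$ is an idempotent measure.

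Next I would identify the support. Because all coefficients are nonnegative, supports multiply exactly: $\mathrm{supp}(\nu_1*\nu_2)=\mathrm{supp}(\nu_1)\,\mathrm{supp}(\nu_2)$. From $\lambda*\mu=\lambda$, $\mu*\lambda=\lambda$, and the hypothesis that $\mathrm{supp}(\mu^{(1)})$ generates $\SG$, iterating yields $\mathrm{supp}(\lambda)\,\SG=\mathrm{supp}(\lambda)=\SG\,\mathrm{supp}(\lambda)$, so $\mathrm{supp}(\lambda)$ is a two-sided ideal of $\SG$. Since the kernel $\K$ is the minimal ideal (Theorem \ref{thm:kernel}), $\K\subseteq\mathrm{supp}(\lambda)$. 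On the other hand, $\lambda$ is idempotent, so by Theorem \ref{thm:idem} its support is a completely simple subsemigroup, which as such has no proper two-sided ideal. As $\K$ is a nonempty ideal of $\SG$ it is in particular an ideal of the subsemigroup $\mathrm{supp}(\lambda)$, and simplicity forces $\K=\mathrm{supp}(\lambda)$; thus $\lambda$ is concentrated on the kernel.

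Finally, applying Theorem \ref{thm:idem} to the idempotent measure $\lambda$ supported on the completely simple semigroup $\K=X\times G\times Y$ gives the canonical product form $\lambda=\alpha\times\omega\times\beta$, with $\omega$ the Haar (uniform) measure on the local group $G$. The main obstacle is the existence step: one must rule out Jordan blocks and eigenvalues off the closed unit disk for the convolution operator, and the clean way to do this is precisely the power-boundedness that comes for free from each $\mu^{(m)}$ being a probability measure. Once existence and idempotency are in hand, the identification of the support and the product decomposition are structural consequences of Theorems \ref{thm:kernel} and \ref{thm:idem}.
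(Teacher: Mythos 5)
Your proof is correct, and it differs from the paper's in two substantive ways. For existence, the paper argues by compactness: it shows $\|\mu_n-\mu*\mu_n\|\le \frac1n\|\mu-\mu^{(n+1)}\|\to0$, extracts a limit point $\lambda$ of the averages $\mu_n$, derives $\lambda=\mu^{(k)}*\lambda=\lambda*\mu^{(k)}$ and hence $\mu_n*\lambda=\lambda$, and then shows any two limit points $\lambda,\lambda'$ satisfy $\lambda=\lambda'*\lambda=\lambda'$, so the sequence converges. You instead invoke the finite-dimensional mean ergodic theorem for the convolution operator $T$: power-boundedness (automatic since each $\mu^{(m)}$ is a probability measure) kills spectrum outside the closed unit disk and nontrivial Jordan structure on the unit circle, and Ces\`aro averaging annihilates the unit-modulus rotations $z\ne1$, leaving the eigenvalue-$1$ component. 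Both are sound; the paper's soft argument generalizes beyond the finite setting with no spectral machinery, while yours identifies $\lambda$ concretely as a spectral projection of $P$. One small imprecision in your spectral step: boundedness of the orbit does not force the \emph{entire} component of $P$ in a size $\ge2$ Jordan block at a unit-modulus eigenvalue to vanish, only the part lying above the true eigenvector; this is harmless, since what survives is exactly the eigenvector part that your subsequent averaging handles. The second difference is to your credit: the paper proves only the convergence statement in detail and then asserts that Theorem \ref{thm:idem} gives support equal to the kernel, although that theorem as stated yields only that the support of an idempotent measure is some completely simple subsemigroup. Your argument closes this gap: exact multiplicativity of supports plus the hypothesis that $\mathrm{supp}(\mu^{(1)})$ generates $\SG$ shows $\mathrm{supp}(\lambda)$ is a two-sided ideal, hence contains $\K$ by minimality, while complete simplicity of $\mathrm{supp}(\lambda)$ forbids the proper ideal $\K\subsetneq\mathrm{supp}(\lambda)$, forcing equality. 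This is precisely where the generation hypothesis is used, and it is the one step the paper leaves implicit.
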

\begin{proof}
See Appendix \ref{app:1} for a proof.
\end{proof}
 \bigskip

We use the notation $\avg {\cdot}.$ to denote averaging with respect to $\mu$ over random elements $K\in\K$. Forming
$N^{-1}\,\sum\limits_{m=1}^N$  on both sides of \eqref{eq:ces}, letting $N\to\infty$ we see immediately that
$$\Omega=\avg K.$$
the average element of the kernel. We wish to discover further properties of the kernel by developing a kernel hierarchy based 
on the constructions of \S2. \bigskip

Consider the semigroup generated by the matrices $\spow{C_i}{\l}$, corresponding to the action on $\l$-sets of vertices.
The map $w\to \spow{w}{\l}$ is a homomorphism of matrix semigroups, Theorem \ref{thm:hom}. From the above discussion, we
have the following.
\begin{proposition} Let $\phi:\SG\to \SG'$ be a homomorphism of matrix semigroups. Let
$$A_\phi=\mu^{(1)}(1)\phi(C_1)+\cdots +\mu^{(1)}(d) \phi(C_d)=\sum_{w\in\SG} \mu^{(1)}(w)\,\phi(w)$$
then the Ces\`aro limit 
$$\lim_{N\to\infty} \frac{1}{N}\,\sum_{m=1}^N A_\phi^{\,m}$$
exists and equals
$$\Omega_\phi=\avg \phi(K).$$
the average over the kernel $\K$ with respect to the measure $\alpha\times\omega\times\beta$ as for $\SG$.
\end{proposition}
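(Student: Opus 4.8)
The plan is to transport the main limit theorem through the homomorphism $\phi$ and use the fact that a homomorphism commutes with the algebraic operations that define the convolution. First I would observe that, writing $A_\phi^{\,m}=\bigl(\sum_w \mu^{(1)}(w)\phi(w)\bigr)^m$ and expanding the product in the image semigroup algebra, the coefficient bookkeeping is identical to that in \eqref{eq:ces} because $\phi$ is multiplicative: $\phi(w)\phi(w')=\phi(ww')$. Hence
$$
A_\phi^{\,m}=\sum_{w\in\SG}\mu^{(m)}(w)\,\phi(w),
$$
the very same convolution powers $\mu^{(m)}$ that govern $A^m$, now pushed forward by $\phi$. This is the key structural point: applying $\phi$ does not alter the measure on $\SG$, it only relabels the matrices on which that measure is evaluated.

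Next I would take the Ces\`aro average of this identity. Forming $N^{-1}\sum_{m=1}^N$ on both sides and using linearity, we get
$$
\frac{1}{N}\sum_{m=1}^N A_\phi^{\,m}=\sum_{w\in\SG}\Bigl(\frac{1}{N}\sum_{m=1}^N\mu^{(m)}(w)\Bigr)\phi(w).
$$
By Theorem~\cite[Th. 2.13]{HM}, the inner Ces\`aro averages converge to $\lambda(w)$ for each $w$, where $\lambda$ is concentrated on the kernel $\K$ with decomposition $\alpha\times\omega\times\beta$. Since $\SG$ is finite, the outer sum has finitely many terms, so termwise convergence gives convergence of the whole expression. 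Passing to the limit yields
$$
\lim_{N\to\infty}\frac{1}{N}\sum_{m=1}^N A_\phi^{\,m}=\sum_{w\in\SG}\lambda(w)\,\phi(w)=\sum_{w\in\K}\lambda(w)\,\phi(w),
$$
the last equality because $\lambda$ vanishes off $\K$. Recognizing the right-hand side as the average of $\phi(K)$ over $K\in\K$ with respect to $\alpha\times\omega\times\beta$, we identify it with $\avg \phi(K).$, which is the asserted $\Omega_\phi$.

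The main obstacle, such as it is, lies in justifying the interchange of limit and summation and in making sure the pushed-forward coefficients are genuinely the \emph{same} $\mu^{(m)}$ rather than the convolution powers of some induced measure on $\SG'$. The former is harmless because $\SG$ is finite and each coefficient converges; the latter is precisely the content of the multiplicativity of $\phi$, and I would emphasize that the homomorphism property of Theorem~\ref{thm:hom} (applied to $\phi=\spow{\cdot}{\l}$, the motivating case) is exactly what guarantees that expanding $A_\phi^{\,m}$ reproduces the $\mu^{(m)}$ attached to $\SG$ rather than to its image. One subtlety worth a remark is that $\phi$ need not be injective, so distinct $w$ may collapse to the same $\phi(w)$; this only regroups terms in the finite sum and does not affect the limit, but it does mean the resulting measure on $\SG'$ is the pushforward of $\alpha\times\omega\times\beta$, consistent with the stated conclusion that the average is taken over $\K$ with respect to the \emph{same} measure as for $\SG$.
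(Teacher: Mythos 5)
Your proof is correct and takes essentially the same route as the paper: establish the identity $A_\phi^{\,m}=\sum_{w\in\SG}\mu^{(m)}(w)\,\phi(w)$ from multiplicativity of $\phi$, then take Ces\`aro averages termwise (legitimate since $\SG$ is finite) and invoke the main limit theorem for $\lambda=\alpha\times\omega\times\beta$ concentrated on $\K$. The paper compresses this to one line (``checking that \dots the result follows immediately''); your write-up just supplies the details it leaves implicit, including the harmless non-injectivity remark.
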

\begin{proof} Checking that
$$A_\phi^{\,m}=\sum_{w\in\SG} \mu^{(m)}(w)\,\phi(w)$$
the result follows immediately.
\end{proof}
\begin{remark}
Especially for computations, e.g. with Maple or Mathematica, it is notable that we can use Abel limits instead of Ces\`aro. 
See Appendix \ref{app:2} for details.
\end{remark} \bigskip

We now have our main working tool.
\begin{proposition} For $1\le \l\le n$, define
$$A_\l=\frac{1}{d} \spow{C_1}{\l}+\cdots +\frac{1}{d} \spow{C_d}{\l}\ .$$
Then the Ces\`aro limit of the powers $A_\l^{\,m}$ exists and equals 
$$\Omega_\l=\avg \spow{K}{\l}.$$
the average taken over the kernel $\K$ of the semigroup $\SG$ generated by $\{C_1,\ldots,C_d\}$.
\end{proposition}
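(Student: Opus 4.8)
The plan is to recognize the statement as the special case of the preceding proposition obtained by taking the homomorphism $\phi$ to be the level-$\l$ map $w\mapsto\spow{w}{\l}$, so that essentially no new analytic work is required. First I would fix $\l$ and set $\phi(w)=\spow{w}{\l}$, and verify that $\phi$ is a genuine homomorphism of matrix semigroups. This is exactly the content of Theorem \ref{thm:hom}: for any $w,w'\in\SG$ we have $\phi(ww')=\spow{(ww')}{\l}=\spow{w}{\l}\spow{w'}{\l}=\phi(w)\phi(w')$. Thus $\phi$ carries $\SG=\SG((C_1,\ldots,C_d))$ onto the image semigroup $\SG'=\SG((\spow{C_1}{\l},\ldots,\spow{C_d}{\l}))$, and the hypothesis of the preceding proposition is satisfied.

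Next I would identify $A_\l$ with the matrix $A_\phi$ appearing in that proposition. In the graph setting the measure $\mu^{(1)}$ assigns weight $1/d$ to each color, $\mu^{(1)}(C_i)=1/d$, so substituting $\phi$ into the definition of $A_\phi$ gives
$$A_\phi=\sum_{i=1}^d \mu^{(1)}(C_i)\,\phi(C_i)=\sum_{i=1}^d \frac{1}{d}\,\spow{C_i}{\l}=A_\l.$$
Hence $A_\l=A_\phi$ for this choice of $\phi$, and in particular $A_\l^{\,m}=A_\phi^{\,m}$ for every $m$.

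Finally I would invoke the preceding proposition verbatim: since $\phi$ is a homomorphism, the Ces\`aro limit of $A_\phi^{\,m}=A_\l^{\,m}$ exists and equals $\Omega_\phi=\avg \phi(K).=\avg \spow{K}{\l}.$, the average being taken over the kernel $\K$ with respect to the measure $\alpha\times\omega\times\beta$ as for $\SG$. The main point is that there is no genuine obstacle here: the entire analytic content—existence of the Ces\`aro limit and its identification with a kernel average—has already been discharged once and for all by the general homomorphism proposition, so the only thing that must actually be checked is the homomorphism property, which is Theorem \ref{thm:hom}. I expect the only subtlety worth a sentence is confirming that the weights of $\mu^{(1)}$ are the uniform color weights $1/d$, so that $A_\phi$ collapses precisely to $A_\l$ rather than to some other weighted average.
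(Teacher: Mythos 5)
Your proposal is correct and is exactly the paper's route: the paper states this proposition immediately after the general homomorphism proposition precisely because it is the special case $\phi(w)=\spow{w}{\l}$, with Theorem \ref{thm:hom} supplying the homomorphism property and the uniform color weights $1/d$ giving $A_\phi=A_\l$. Nothing further is needed.
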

$\Omega_\l$ satisfies the relations
$$A_\l\Omega_\l=\Omega_\l A_\l=\Omega_\l^2=\Omega_\l\ .$$
\begin{proposition}
The rows of $\Omega_\l$ span the set of left eigenvectors of $A_\l$ for eigenvalue 1. The columns of $\Omega_\l$ span
the set of right eigenvectors of $A_\l$ for eigenvalue 1.
\end{proposition}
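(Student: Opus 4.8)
The plan is to build everything on the two facts already established for $\Omega_\l$: the Ces\`aro representation $\Omega_\l=\lim_{N\to\infty}\frac1N\sum_{m=1}^N A_\l^{\,m}$ and the relations $A_\l\Omega_\l=\Omega_\l A_\l=\Omega_\l^2=\Omega_\l$. Each of the two assertions (columns versus right eigenvectors, rows versus left eigenvectors) is an equality of two subspaces, so I would prove it by a pair of inclusions, one supplied by the algebraic relations and the other by the Ces\`aro limit.

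First I would record that the column space and row space of $\Omega_\l$ consist of eigenvectors. Reading $A_\l\Omega_\l=\Omega_\l$ one column at a time gives $A_\l c=c$ for every column $c$ of $\Omega_\l$, and hence $A_\l(\Omega_\l x)=\Omega_\l x$ for every $x$; thus the column space is contained in the eigenspace $\ker(A_\l-I)$. Dually, reading $\Omega_\l A_\l=\Omega_\l$ one row at a time gives $r A_\l=r$ for every row $r$, and hence $(x\Omega_\l)A_\l=x\Omega_\l$; thus the row space is contained in the left eigenspace for eigenvalue $1$.

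For the reverse inclusions the Ces\`aro limit does the work. If $v$ is any right eigenvector, $A_\l v=v$, then $A_\l^{\,m}v=v$ for all $m$, so every partial average satisfies $\frac1N\sum_{m=1}^N A_\l^{\,m}v=v$, and letting $N\to\infty$ yields $\Omega_\l v=v$; this exhibits $v$ as $\Omega_\l v$, a member of the column space. Symmetrically, a left eigenvector $u$ with $uA_\l=u$ satisfies $u\Omega_\l=u$, placing it in the row space. Combining the inclusions, the column space of $\Omega_\l$ equals the right eigenspace for eigenvalue $1$ and the row space equals the left eigenspace, which is precisely the statement that the columns span the right eigenvectors and the rows span the left eigenvectors.

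I do not expect a substantive obstacle: once the Ces\`aro limit is in hand the argument is purely formal and finite-dimensional. The only point deserving a word of care is the passage to the limit in $\Omega_\l v=\lim_N \frac1N\sum_m A_\l^{\,m}v$, which is immediate because the partial averages are constant in $N$ along a fixed eigenvector; and it is worth noting that the spanning claims are exactly the two reverse inclusions, so neither the idempotency $\Omega_\l^2=\Omega_\l$ nor any spectral decomposition of $A_\l$ is actually required.
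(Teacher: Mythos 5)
Your proof is correct and follows essentially the same route as the paper: the key step in both is that an invariant vector $v$ satisfies $v = vA_\l^{\,m}$ for all $m$, so averaging and passing to the Ces\`aro limit gives $v = v\Omega_\l$, placing $v$ in the row (resp.\ column) space. The only difference is that you spell out the forward inclusion (rows/columns of $\Omega_\l$ are themselves eigenvectors) explicitly, whereas the paper leaves it implicit in the already-stated relations $A_\l\Omega_\l=\Omega_\l A_\l=\Omega_\l$.
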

\begin{proof}
For a left invariant vector $v$, we have 
\begin{align*}
v=vA_\l=vA_\l^{\,m}=v\Omega_\l &&\text{ (taking the Ces\`aro limit)}
\end{align*}
Thus, $v$ is a linear combination of the rows of $\Omega_\l$. Similarly for right eigenvectors.
\end{proof}
Now, as $\Omega_\l$ is an idempotent, we have $\rk \Omega_\l=\tr \Omega_\l$ so
\begin{corollary} The dimension of the eigenspace of right/left invariant vectors at level $\l$ equals $\tr \Omega_\l$.
\end{corollary}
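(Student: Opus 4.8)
The plan is to read everything off from the immediately preceding Proposition together with the idempotence of $\Omega_\l$. That Proposition asserts that the rows of $\Omega_\l$ span the space of left eigenvectors of $A_\l$ for eigenvalue $1$ and that the columns of $\Omega_\l$ span the space of right eigenvectors for eigenvalue $1$. Consequently the dimension of the left invariant eigenspace is exactly the dimension of the row space of $\Omega_\l$, namely its row rank, and the dimension of the right invariant eigenspace is the dimension of its column space, its column rank. So the whole statement reduces to the numerical identity $\mathrm{row\,rank}(\Omega_\l)=\mathrm{col\,rank}(\Omega_\l)=\tr\Omega_\l$.

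Next I would pin down $\rk\Omega_\l=\tr\Omega_\l$, which is the only content beyond bookkeeping. Since $\Omega_\l^2=\Omega_\l$, the minimal polynomial of $\Omega_\l$ divides $x(x-1)$, so $\Omega_\l$ is diagonalizable with every eigenvalue equal to $0$ or $1$. The rank of a diagonalizable idempotent equals the dimension of its image, which is precisely the multiplicity of the eigenvalue $1$; the trace, being the sum of the eigenvalues, is that same multiplicity. Hence $\rk\Omega_\l=\tr\Omega_\l$, exactly as recorded in the line preceding the corollary.

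Finally I would assemble the pieces. For any matrix, row rank equals column rank, so the common value $\rk\Omega_\l$ serves for both eigenspaces; together with $\rk\Omega_\l=\tr\Omega_\l$ this yields that the left invariant eigenspace and the right invariant eigenspace each have dimension $\tr\Omega_\l$.

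There is no deep obstacle here. The only care needed is in the bookkeeping: confirming that the dimension of the span of the rows (respectively columns) of $\Omega_\l$ is literally its row (respectively column) rank — which holds because those rows (columns) are the very generators of the span — and then invoking the equality of row and column rank so that the two a priori distinct eigenspace dimensions coincide. Everything then rests on the single structural fact that $\Omega_\l$ is an idempotent.
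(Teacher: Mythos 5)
Your proposal is correct and follows essentially the same route as the paper: the preceding Proposition identifies the invariant eigenspaces with the row and column spaces of $\Omega_\l$, and the paper then invokes exactly the fact you prove, that $\rk\,\Omega_\l=\tr\,\Omega_\l$ for an idempotent. The only difference is that you spell out the standard linear-algebra details (diagonalizability, eigenvalues $0,1$, equality of row and column rank) that the paper leaves implicit.
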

Let $r$ denote the rank of the kernel, the common rank of $k\in\K$. Then we see that $\Omega_\l$ vanishes for $\l>r$.  \bigskip

It is important to remark that $A_\l$ is in general \textit{substochastic}, i.e., it may have zero rows or rows that do not sum to one.
As noted in the Appendix \ref{app:2}, the Abel limits exist for the $A_\l$. They agree with the Ces\`aro limits.

\begin{subsection}{Hierarchy of kernels}
For $1\le\ell\le r$, we have a \textsl{hierarchy of kernels} $$\spow{\K}{\l}=\{\spow{k}{\l}\colon k\in\K\}\ .$$ 
Because we are working with all functions on a finite set, not only permutations, for the higher powers $\spow{F}{\l}$, $\l>1$, we will
have a row of zeros for any $\rI$ collapsed by the corresponding function $f$. At each level, we may introduce an additional state,
the \textit{collapsed state}, $\ttx$. 
We adjoin an additional row and column, labelled $\ttx$, to $\spow{F}{\l}$, denoting the extended matrix by $\spow{\dot F}{\l}$, and define 
$$ \spow{\dot{F}}{\l}_{\rI\rJ}=\begin{cases}1,&\text{ if } f(\rI)=\rJ\in\P_\l\\ 
1,&\text{ if } |f(\rI)|<|\rI|,\ \rJ=\ttx\\
1, &\text{ if } \rI=\rJ=\ttx\\
0,&\text{ otherwise}
\end{cases}
$$
In other words, the collapsed state $\ttx$, for any level $\l$, corresponds to a fixed point of the function. It is 
an absorbing state of the corresponding transition diagram. This way, we recover binary stochastic matrices at every level.
This may be called the \textit{augmented} matrix or augmented $\l^{\rm th}$ power. \bigskip

\begin{example}
With $f=[2344]$, we have
$$\spow{F}{2}= \left[ \begin {array}{cccccc} 0&0&0&1&0&0\\
0&0&0&0&1&0\\0&0&0&0&1&0\\0&0&0&0&0&1\\
0&0&0&0&0&1\\0&0&0&0&0&0\end {array} \right]\text{ and  }
\spow{\dot F}{2}= \left[ \begin {array}{ccccccc} 0&0&0&1&0&0&0\\
0&0&0&0&1&0&0\\
0&0&0&0&1&0&0\\0&0&0&0&0&1&0\\
0&0&0&0&0&1&0\\0&0&0&0&0&0&1\\0&0&0&0&0&0&1\end {array} \right]$$
\end{example}

\begin{example} Continuing with the kernel from \S3, at level 1, the partitions are
$$\P_1=\{\,\{1, 2\}, \{3, 5\},\{4, 6\}\, \}\,,\qquad \P_2=\{\,\{1, 6\},\{2, 4\},  \{3, 5\}\,\}$$
and the range classes
$$\R_1=\{1, 3, 4\}\,,\quad \R_2=\{1, 4, 5\}\,,\quad \R_3=\{2, 3, 6\}\,,\quad \R_4=\{2, 5, 6\}\ .$$
For convenience, we write out the level 2 indices:
$$
\begin{array}{rrrrrrrrrrrrrrr}
1&\, 2&\,3&\,4&\,5&\,6&\,7&\,8&\,9&\,10&\,11&\,12&\,13&\,14&\,15\\
12&\, 13&\,14&\,15&\,16&\,23&\,24&\,25&\,26&\,34&\,35&\,36&\,45&\,46&\,56
\end{array}
$$
 At level 2, we have
\begin{align*}
\P_1 &= \{\,\{1, 11, 14, \ttx\}, \{2, 4, 6, 8\}, \{3, 5, 7, 9\}, \{10, 12, 13, 15\}\,\}\\
\P_2 &= \{\,\{1, 3, 9, 14\}, \{2, 4, 12, 15\}, \{5, 7, 11, \ttx\}, \{6, 8, 10, 13\}\,\}
\end{align*}
$$                     \R_1= \{2, 3, 10, \ttx\}\,,\,\R_2= \{3, 4, 13, \ttx\}  \,,\,\R_3=\{6, 9, 12, \ttx\}   \,,\,          \R_4= \{8, 9, 15, \ttx\}$$
At level 3,
\begin{align*}
       \P_1&= \{\,\{5, 7, 8, 10, 11, 13, 14, 16\}, \{1, 2, 3, 4, 6, 9, 12, 15, 17, 18, 19, 20, \ttx\}\,\}\\
       \P_2&= \{\,\{1, 3, 5, 8, 13, 16, 18, 20\}, \{2, 4, 6, 7, 9, 10, 11, 12, 14, 15, 17, 19, \ttx\}\,\}
\end{align*}
$$  \R_1= \{5, \ttx\} \,,\quad                        \R_2= \{8, \ttx\}         \,,\quad                \R_3= \{13, \ttx\}        \,,\quad                 \R_4= \{16, \ttx\}$$
{\bf Note.} The level three labels are written out as part of this example continued at the end of \S4.
\end{example}

We describe the features of the higher-order kernels with the collapsed state included. 
\begin{enumerate}     
\item At level $\l$, a given range becomes the set of its $\l$-subsets with the collapsed state adjoined. Any $\l$-tuple not preserved by the
functions of the kernel, for a given partition, will map into the collapsed state.\medskip

\item For a given partition, the associated level-$\l$ partition is obtained as follows.
To get a block at level $\l$, take a subset of $\l$ blocks of the original partition. Form the cross product of those blocks. The corresponding
sets/labels comprise the level-$\l$ block. There will be an additional block for the collapsed state. Any $\l$-set containing two or more elements
from the same block will be collapsed. So a special block consists of all $\l$-sets collapsed in the partition and the collapsed state
$\ttx$. It is the block mapped into the $\ttx$ of each range class.\medskip

\item At level $r$, each range is its own label, along with the collapsed state. Each kernel element with a prescribed partition and range will map all
cross-sections, formed by taking the cross product of all $r$ blocks of the given partition, into the range set, a single label. All other $r$-sets
map into the collapsed state.\medskip

\item One recovers $\spow{\K}{\l}$ at levels $\l>1$ by dropping the $\ttx$'s from the range classes and dropping the entire
block containing $\ttx$ from each partition.
\end{enumerate}

\end{subsection}
\begin{subsection}{Kernel hierarchies}
First some conventions regarding notation. \par
We will use $u$ to denote a row vector of all ones, the dimension implicit according to context, 
with $\ud$ the all-ones column vector. Recall that $\pi$ is the invariant distribution for
the Markov chain generated by $A$.  \bigskip

Summarizing some basic features:
$$\pi A=\pi\,, \quad A\ud=\ud\,,\quad \Omega=\ud\pi\ .$$
We would like to extend the above relations to level $\l$. In other words, we are looking for invariant vectors, fixed by $A_\l$. Certainly
rows and columns of $\Omega_\l$ are candidates. Generally there are three classes of hierarchies that are of interest:
\begin{enumerate}     
\item We define $\pi_\l=u\Omega_\l$ and $u_{\l}^+=\Omega_\l \ud$.
\item Starting with $\pi_r$ a left invariant vector of $A_r$, we can construct a hierarchy as we did in \S2, using inclusion operators.
In this context, we will construct a hierarchy of right eigenvectors using  a special class of inclusion operators. We give details below.
\item We can solve for the right (resp. left) nullspace of $I-A_\l$ in each layer. Generally, the dimension of these nullspaces, $\tr\Omega_\l$,
will exceed 1. However, for level 1, by assumption $\tr\Omega_1=\tr \Omega=1$, the spaces spanned by $\ud$ and $\pi$ respectively.
\end{enumerate}
We will not discuss point (3) further in this paper, although it is of particular interest for $\l=2$. In general, we say that the system
has/is of $\l$-rank $r_\l$ if $\tr\Omega_\l=r_\l$. In the following sections, we will study approaches (1) and (2) in detail. We will use our
theory for several applications:  \bigskip

1. Finding the rank of the kernel.\par
2. Characterizing kernels which are right groups in terms of $u_2$.\par
3. Classifying all kernels of rank $n-1$ for the case of two colors.
\end{subsection}

\subsection{The fields $\pi_\l$ and $u_\l$}
First consider $u_2^+=\Omega_2\ud$. We have
$$u_2^+=\avg \spow{K}{2}\ud. $$
Now, for $k\in\K$, 
$$u_2(\rI)=(\spow{k}{2}\ud)_\rI=\sum_\rJ \spow{k}{2}_{\rI\rJ}=\sum_{j_1,j_2} (\spow{k}{2})_{i_1i_2,j_1j_2}$$
With $\rI=(i_1,i_2)$ fixed, this reads 1 just in case $\rI$ is $k$-preserved. So, this will equal 1 for 
all $k$ in a given row of the kernel, having the same partition, $\P$, say, if it is 1 for some such $k$. \bigskip

\begin{definition}  The vertices $i_1,\ldots,i_t$, $t\le r$, \textit{split} across partition $\P$ if they are in different blocks of $\P$. So they are
$k$-preserved for corresponding elements $k$ of the kernel. Otherwise, they \textit{collapse in $\P$}.
\end{definition}

Thus, 
$$u_2(\rI)=\sum_{(i_1,i_2) \text{ split across } \P}\alpha(\P)=\text{Prob}(\rI \text{ is split across a given partition})$$
with $\alpha$ the measure on the partitions of the kernel. The interpretation for $u_\l$ is similar:
$$u_\l(\rI)=\sum_{(i_1,i_2,\ldots,i_\l) \text{ split across } \P}\alpha(\P)=\text{Prob}(\rI \text{ is split across a given partition})$$
Note that for all $\l$, $1\le\l\le r$, an  $\l$-subset, $\rI$, of a range class has $u_\l(\rI)$ equal to 1. \bigskip

The expectation $\avg KK^\top.$ is closely related. Observe that
\begin{equation}\label{eq:kk}
 (kk^\top)_{i_1i_2}=\sum_j k_{i_1j}k_{i_2j}=\begin{cases} 1,& (i_1,i_2) \text{ collapse in }\P\\ 0,& \text{ otherwise}\end{cases}
\end{equation}
with $\P$ being the partition of $k$. We have, using the pair $(i,j)$ for the level-2 index $\rI$,  recalling the all-ones matrix $J$,
\begin{proposition}\label{prop:kkstar}
 The $(i,j)$ entry of $\avg J-KK^\top.$ equals $u_2(i,j)$.
\end{proposition}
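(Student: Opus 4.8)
The plan is to compute the $(i,j)$ entry of $\avg J-KK^\top.$ directly, entry by entry, reading the behaviour of a single kernel element off \eqref{eq:kk} and then averaging. First I would fix a pair $i\ne j$ and an individual $k\in\K$ with partition $\P$. Since every entry of $J$ equals $1$, equation \eqref{eq:kk} gives at once
$$(J-kk^\top)_{ij}=1-(kk^\top)_{ij}=\begin{cases}1,& (i,j)\text{ split across }\P,\\ 0,& (i,j)\text{ collapse in }\P.\end{cases}$$
Thus the $(i,j)$ entry of $J-kk^\top$ is exactly the indicator that the pair $(i,j)$ is $k$-preserved, and this depends on $k$ only through its partition $\P$.

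Next I would average over a random $K\in\K$ with respect to $\mu=\alpha\times\omega\times\beta$. By linearity, the $(i,j)$ entry of $\avg J-KK^\top.$ is
$$\sum_{k\in\K}\mu(k)\,(J-kk^\top)_{ij}=\sum_{\substack{k\in\K:\ (i,j)\\ \text{split across partition of }k}}\mu(k).$$
Because the split/collapse dichotomy is a function of the partition alone, I can organize this sum by partition $\P$ and replace the inner weight by the total $\mu$-mass carried by all kernel elements sharing that partition.

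The one step that genuinely uses the Rees structure of \S3 is this marginalization. Writing $\mu(k)=\alpha(k_1)\beta(k_3)/|G|$, for a fixed partition $\P$ I sum over the $|G|$ elements of each local group and over all range classes $\R$; the multiplicity $|G|$ cancels the factor $1/|G|$, and $\sum_\R\beta(\R)=1$, leaving $\sum_{k:\,k_1=\P}\mu(k)=\alpha(\P)$. Substituting back yields
$$\sum_{k\in\K}\mu(k)\,(J-kk^\top)_{ij}=\sum_{\P:\ (i,j)\text{ split across }\P}\alpha(\P),$$
which is precisely the definition of $u_2(i,j)$; the diagonal case $i=j$ is trivially zero on both sides, since each row of a stochastic $k$ sums to one. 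I do not anticipate a real obstacle here: the only thing to watch is the bookkeeping in the marginalization, namely confirming that summing the Haar-weighted group factor and the range measure $\beta$ collapses the weight of a partition class to exactly $\alpha(\P)$.
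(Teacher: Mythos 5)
Your proof is correct and follows essentially the same route as the paper, which states the proposition as an immediate consequence of equation \eqref{eq:kk} together with the already-derived identity $u_2(\rI)=\sum_{\rI\text{ split across }\P}\alpha(\P)$. The only thing you add is an explicit verification of the marginalization $\sum_{k:\,k_1=\P}\mu(k)=\alpha(\P)$ over the Haar and $\beta$ factors, which the paper leaves implicit when it observes that preservation of a pair depends only on the partition, i.e.\ is constant along each row of the kernel.
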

In other words, if we form an $n\times n$ matrix with entries $u_2(i,j)$ we get exactly $\avg J-KK^\top.$.
It is of interest to note that $kk^\top=ee^\top$ if the idempotent $e\in X$ labels that row of the kernel, since it only depends on the partition.
So we can write as well
$$\avg J-KK^\top.=\avg J-EE^\top.$$
averaging, using the measure $\alpha$, over the idempotents in any single column of the kernel. \bigskip

For $\pi_\l$, the interpretation is not as straightforward except for $\l=1$, where $\pi_1=\pi$ is the invariant distribution for $A$ and
for $\pi_r$, which involves the range classes. We have
$$\pi_r=\avg u\spow{K}{r}.$$
And for $k\in\K$,
$$(u\spow{k}{r})_\rJ=\sum_\rI \spow{k}{r}_{\rI\rJ}$$
Since we are at level $r$, the $r$-tuple $\rI$ must be split across the partition, $\P$, of $k$, i.e., it must be $k$-preserved and map onto a range class, $\rJ$.
Every $r$-tuple split across $\P$ maps to $\rJ$ by every element in the same cell of the kernel as $k$. So for $\rJ$ fixed, averaging over the column
with $\rJ$ as range class yields the sum
$$\kappa=\sum_\P \alpha(\P)\,\prod_{\text{blocks }\B\text{ of }\P} |\B(\P)|$$
where for a given $\P$ we get the cardinality of the Cartesian product of the blocks comprising it.
Note that this number is independent of $\rJ$. Thus, when averaging over $\K$ we get $\kappa \,\beta(\rJ)$, the $\beta$-probability of the range class.
Thus,
\begin{proposition}
$\pi_r$ is supported on the range classes of the kernel. The corresponding distribution, normalized with total weight equal to one, is precisely
the measure $\beta$ on the range classes.
\end{proposition}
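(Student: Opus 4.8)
The plan is to extract both assertions from the evaluation of $(u\spow{K}{r})_\rJ$ sketched just above, organizing the average over $\K$ according to the Rees product $\K=\ms.X.\times\ms.G.\times\ms.Y.$ and the product form $\mu(k)=\alpha(k_1)\beta(k_3)/|G|$ supplied by Theorem \ref{thm:idem}.

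First I would settle the support claim directly. Fix $k\in\K$, whose range class $k(S)$ is an $r$-set. An $r$-set $\rI$ contributes to $(u\spow{k}{r})_\rJ=\sum_\rI\spow{k}{r}_{\rI\rJ}$ only when it is $k$-preserved, and then $k(\rI)\subseteq k(S)$ with $|k(\rI)|=r=|k(S)|$ forces $k(\rI)=k(S)$. Hence $\spow{k}{r}$ has a single nonzero column, indexed by $\rJ=k(S)$, so $(u\spow{k}{r})_\rJ$ vanishes unless $\rJ$ is the range of $k$. Averaging preserves this, so $\pi_r$ is supported on the range classes of $\K$.

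Next I would fix a range class $\rJ$ and compute $\pi_r(\rJ)=\avg (u\spow{K}{r})_\rJ.$. By the previous step only elements of the column with range $\rJ$ contribute, namely those of the form $(x,g,\rJ)$ with $x\in\ms.X.$, $g\in\ms.G.$; writing $\P$ for the partition indexed by $x$, the value $(u\spow{(x,g,\rJ)}{r})_\rJ$ is the number of $r$-sets split across $\P$. Since the local group acts as permutations of the range, this count is $\prod_{\text{blocks }\B\text{ of }\P}|\B(\P)|$, the size of the Cartesian product of the blocks of $\P$, and it depends only on $\P$. Substituting $\mu((x,g,\rJ))=\alpha(\P)\beta(\rJ)/|G|$ and summing $1/|G|$ over the $|G|$ group elements to $1$, the remaining sum over partitions factors out $\beta(\rJ)$ and yields
$$\pi_r(\rJ)=\kappa\,\beta(\rJ)\,,\qquad \kappa=\sum_\P\alpha(\P)\prod_{\text{blocks }\B\text{ of }\P}|\B(\P)|\ .$$

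Finally, normalization is automatic: since $\beta$ is a probability measure on the ranges, $\sum_\rJ\pi_r(\rJ)=\kappa\sum_\rJ\beta(\rJ)=\kappa$, so the rescaled field $\pi_r(\rJ)/\kappa=\beta(\rJ)$ reproduces $\beta$ exactly. The step deserving genuine care is the $\rJ$-independence of the constant $\kappa$: I would verify that the number of $r$-sets split across a fixed partition is insensitive to which range class the local group maps them onto, so that the same $\kappa$ appears in every coordinate and $\beta(\rJ)$ factors cleanly. This uniformity --- rather than any delicate estimate --- is precisely what ensures that normalizing $\pi_r$ returns $\beta$ itself and not a $\beta$-dependent reweighting.
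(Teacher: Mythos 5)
Your proposal is correct and follows essentially the same route as the paper: both compute $(u\spow{k}{r})_\rJ$ by identifying the contributing $r$-sets as transversals of the partition of $k$ (so $\spow{k}{r}$ feeds only the column of its range), then average using the Rees product measure $\alpha\times\omega\times\beta$ to get $\pi_r(\rJ)=\kappa\,\beta(\rJ)$ with $\kappa=\sum_\P\alpha(\P)\prod_\B|\B(\P)|$ independent of $\rJ$. Your write-up is simply more explicit than the paper's (isolating the support claim, carrying the $1/|G|$ Haar factor, and checking the normalization), and the $\rJ$-independence you flag is immediate since the transversal count depends only on the partition.
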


\begin{remark} The $u_\l$ vectors are scaled so that the maximum entry equals 1, the probability of a set being split across all of the partitions.
In particular, this is the case for each range class. Note that a zero entry in $u_2$ means that the corresponding pair appear in the same block for
every partition.\medskip

The $\pi_l$ vectors may be scaled as probability distributions, so that the sum of the entries equals 1.
\end{remark}

\subsection{Hierarchies by inclusion operators}
The $\pi_\l$ hierarchy works very similarly as for the function hierarchy in \S2, since $\pi_r$ is supported on the range classes, hence
is compatible for all colors $C_i$. Thus, in this context we define
$$\pi_{r-1}=\pi_r\EP{E}{r,r-1},\ldots,\pi_\l=\pi_{\l+1}\EP{E}{\l+1,\l}, \ldots$$
where $\pi_1$ will in general be a multiple of $\pi$, requiring scaling to a probability distribution. 
More explicitly, we may write the transition from level $\l+1$ to level $\l$ 
$$ \pi_\ell(I)=\sum_{\rI\subset \rJ}\pi_{\ell+1}(J) $$

We have $\pi_r$ as a left eigenvector of $A_r$ and
$C_i$-compatible for every coloring $C_i$. Then, proceeding inductively, we have, with $\pi_\l$ and $\pi_{\l-1}$, $C_i$-compatible, $1\le i\le d$,
$$\pi_{\l-1}\spow{C_i}{(\l-1)}=\pi_\l\EP{E}{\l,\l-1}\spow{C_i}{(\l-1)}=\pi_\l\spow{C_i}{\l}\EP{E}{\l,\l-1}$$
Now, average over $i$ to recover $A_{\l-1}$, $A_\l$, and using the invariance of $\pi_l$ for $A_\l$,
$$\pi_{\l-1}A_{\l-1}=\pi_\l\EP{E}{\l,\l-1}A_{\l-1}=\pi_\l A_{\l}\EP{E}{\l,\l-1}=\pi_\l\EP{E}{\l,\l-1}=\pi_{\l-1}$$
as required. \bigskip

For the $u_\l$ hierarchy, we need a useful theorem due originally to Friedman \cite{FRC}, reformulated in the semigroup setting in \cite{BuMu}. 
\begin{theorem} [\cite{BuMu, FRC}, version 1]\label{thm:bmf1}
Let $\B$ be a block in any partition of the kernel. Then
$$\pi(\B)=\sum_{i\in\B} p_i=\frac{1}{r}$$
where $p_i$ are the components of the invariant distribution $\pi$.
\end{theorem}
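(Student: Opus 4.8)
The plan is to evaluate the sandwiched matrix $e\Omega e$, for a suitably chosen kernel idempotent $e\in\K$, in two different ways and to compare the two expressions entrywise. First I would pick $e$ so that its partition is the partition containing the given block $\B$; because the idempotents of $\K$ are indexed by (partition, range)-pairs, every partition of the kernel occurs as the partition of some idempotent, so it is enough to prove that \emph{all} blocks of $e$'s partition have $\pi$-measure $1/r$. Denote the blocks of $e$ by $\B_1,\dots,\B_r$ and its range by $\{j_1,\dots,j_r\}$, with $e(\B_a)=j_a$.

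The first evaluation is immediate from the rank-one identity $\Omega=\ud\pi$. Since $e$ is a stochastic matrix we have $e\ud=\ud$, hence $e\Omega e=\ud\,(\pi e)$, a matrix every row of which equals $\pi e$. Reading off $\pi e$ gives $(\pi e)_{j_b}=\sum_{i\in\B_b}p_i=\pi(\B_b)$ on the range coordinates and $0$ elsewhere, so the $(i,j_b)$ entry of $e\Omega e$ equals $\pi(\B_b)$ for \emph{every} row $i$.

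The second evaluation is where the real work lies. Writing $\Omega=\avg K.$ and using linearity, $e\Omega e=\avg eKe.$, and each $eke$ lands in the local group $G=e\K e$. The crucial step is to identify the law of $eke$ when $k$ is distributed according to the invariant measure $\alpha\times\omega\times\beta$: applying the Rees multiplication rule of Theorem \ref{thm:kernel} twice, the $G$-coordinate of $eke$ has the form $c_1\,g\,c_2$ for fixed $c_1,c_2\in G$, so left-right translation invariance of the Haar factor $\omega$ forces the pushforward to be exactly Haar measure on $G$. Granting this, $e\Omega e$ is the uniform average of the local group viewed as a permutation group on the range, and its $(i,j_b)$ entry is the probability $\Pr_{\sigma}[\sigma(a)=b]$ that a uniformly random $\sigma\in G$ carries the block-index $a$ of $i$ to $b$. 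Establishing that this pushforward really is Haar — the content of Theorem \ref{thm:idem} together with the invariance computation — is the main obstacle.

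Comparing the two evaluations yields $\pi(\B_b)=\Pr_{\sigma}[\sigma(a)=b]$ for every block-index $a$, since the row $i$ may lie in any block; in particular the right-hand side is independent of $a$. Summing this over $a=1,\dots,r$ and using that each permutation $\sigma$ sends exactly one index to $b$, the right-hand sum equals $1$ while the left-hand side is $r\,\pi(\B_b)$, whence $\pi(\B_b)=1/r$. It is worth noting that the argument uses only that $\Omega$ has identical rows, so it is exactly the irreducibility and aperiodicity of $A$ that drive the result; transitivity of the local group is not needed as an input, but in fact emerges as a by-product.
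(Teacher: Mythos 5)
The paper itself never proves this theorem: it is imported from Friedman \cite{FRC} in the semigroup formulation of Budzban and Mukherjea \cite{BuMu}, so there is no internal proof to compare against, and your argument must be judged on its own terms. It is correct, and it buys something the paper does not supply: a self-contained derivation using only ingredients established here, namely $\Omega=\ud\pi$ (irreducibility and aperiodicity), $\Omega=\sum_{k\in\K}\lambda(k)\,k$ with $\lambda=\alpha\times\omega\times\beta$ from the Ces\`aro limit theorem, and the structural facts of \S\ref{ssec:kms}. Two remarks. First, a small imprecision: the factors you call fixed are $c_1=\phi(e,x)$ and $c_2=\phi(y,e)$, which depend on the $x$- and $y$-coordinates of $k$, not constants over $\K$; the Haar conclusion survives because $\lambda$ is a product measure, so you condition on $(x,y)$, use two-sided translation invariance of $\omega$, and then average --- a mixture of copies of Haar measure on the same group is Haar. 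Second, and more useful: the step you single out as ``the main obstacle'' can be skipped entirely. Nothing in your final count uses uniformity of the law of $eKe$; it only uses that $e\Omega e$ is \emph{some} convex combination of elements $h\in e\K e$, and by \S\ref{ssec:kms}, item (3), every such $h$ has $e$'s partition $\{\B_1,\dots,\B_r\}$ and acts as a permutation $\sigma_h$ on $e$'s range. Fix $b$, choose one representative $i_a\in\B_a$ for each $a$, and sum the $(i_a,j_b)$ entries over $a$: your first evaluation gives $r\,\pi(\B_b)$, while for each individual $h$ the corresponding sum is $\sum_a\chi(\sigma_h(a)=b)=1$ because $\sigma_h$ is a bijection, so averaging gives $r\,\pi(\B_b)=1$ at once. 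Thus Theorem \ref{thm:idem} and the explicit Rees bookkeeping are dispensable, the proof is even more elementary than you suggest, and your closing observation still stands: $\pi(\B_b)=1/r>0$ forces, for every pair $(a,b)$, some $h$ in the support of the law of $eKe$ with $\sigma_h(a)=b$, i.e., transitivity of the local group.
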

Now we have the $u_\l$-hierarchy.

\begin{theorem} Define inductively, starting from $u_r$,
$$ u_\l(I)=\sum_i p_i \,u_{\ell+1}(I\cup \{i\}) $$
Then $u_\l$ is a multiple of $\Omega_\l\ud$ for each $\l$, $1\le\l\le r$.
\end{theorem}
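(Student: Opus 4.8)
The plan is to exploit the probabilistic reading of $\Omega_\l\ud$ developed above: its $\rI$-entry is
$$(\Omega_\l\ud)(\rI)=\sum_{\P:\,\rI\text{ split across }\P}\alpha(\P)=\text{Prob}(\rI\text{ split across a random partition}),$$
and to show by downward induction on $\l$ that the inductively defined $u_\l$ equals a scalar multiple $c_\l\,\Omega_\l\ud$. The base case is $\l=r$, where $u_r$ is taken to be $\Omega_r\ud$ (so $c_r=1$); the inductive hypothesis is $u_{\l+1}=c_{\l+1}\,\Omega_{\l+1}\ud$.

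First I would dispose of the diagonal terms: in $u_\l(\rI)=\sum_i p_i\,u_{\l+1}(\rI\cup\{i\})$ every index $i\in\rI$ gives $\rI\cup\{i\}=\rI$, an $\l$-set, on which the level-$(\l+1)$ field $u_{\l+1}\in\V_{\l+1}$ vanishes. Hence only $i\notin\rI$ contribute. Substituting the inductive hypothesis together with the probabilistic formula, and interchanging the order of summation so as to sum over partitions of the kernel first, gives
$$u_\l(\rI)=c_{\l+1}\sum_{\P}\alpha(\P)\sum_{i\notin\rI}p_i\,\chi(\rI\cup\{i\}\text{ split across }\P).$$

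The heart of the argument is the inner sum. Fix a partition $\P$ of the kernel; it has exactly $r$ blocks. If $\rI$ is not split across $\P$ then $\rI\cup\{i\}$ is split for no $i$, and the inner sum is $0$. If $\rI$ is split across $\P$ it occupies $\l$ of the $r$ blocks, and adjoining a vertex $i\notin\rI$ keeps the set split precisely when $i$ lies in one of the $r-\l$ unoccupied blocks. Thus the inner sum equals the total invariant mass $\sum_{\B\text{ unoccupied}}\pi(\B)$ of those blocks. Here I would invoke Friedman's theorem (Theorem \ref{thm:bmf1}), which assigns every block $\pi(\B)=1/r$: the $r-\l$ unoccupied blocks contribute exactly $(r-\l)/r$, a factor independent of both $\rI$ and $\P$. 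Pulling this constant out of the partition sum leaves $\sum_{\P:\,\rI\text{ split}}\alpha(\P)=(\Omega_\l\ud)(\rI)$, so
$$u_\l(\rI)=c_{\l+1}\,\frac{r-\l}{r}\,(\Omega_\l\ud)(\rI),$$
which closes the induction with $c_\l=c_{\l+1}(r-\l)/r$ and shows $u_\l$ is a multiple of $\Omega_\l\ud$.

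The main obstacle — and really the only substantive point — is recognizing that the $p_i$-weighted sum over the adjoined vertex $i$ collapses to a single constant factor. This hinges entirely on Friedman's theorem: because every block carries the same invariant mass $1/r$, the weight gained in passing from level $\l$ to level $\l+1$ depends only on the number $r-\l$ of free blocks and not on which blocks $\rI$ happens to occupy. Without equal block masses the resulting coefficient would vary with $\rI$, and $u_\l$ would fail to be a uniform scalar multiple of $\Omega_\l\ud$.
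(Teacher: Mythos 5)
Your proposal is correct and follows essentially the same argument as the paper: both proofs proceed by downward induction, interpret $u_{\l+1}$ via the $\alpha$-probability of being split across a partition, count that a split $\l$-set occupies $\l$ of the $r$ blocks so the adjoined vertex must land in one of the $r-\l$ free blocks, and invoke Friedman's theorem (each block has $\pi$-mass $1/r$) to extract the constant factor $(r-\l)/r$ independent of $\rI$ and $\P$. Your version merely adds some explicit bookkeeping (the diagonal terms $i\in\rI$ and the scalars $c_\l$) that the paper leaves implicit.
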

So this hierarchy is consistent with that constructed using $\Omega_\l$ at each level.
\begin{proof}
We have the components of $u_r$ as the $\alpha$-probabilities of the indices being split across partitions. Proceeding inductively,
assume we are at level $\l+1$. Use the stated formula to define $u_\l$:
$$u_\l(I)=\sum_i p_i \,u_{\ell+1}(I\cup \{i\}) \ .$$
Consider $\rI\in\P_\l$. To get a nonzero contribution from partition $\P$ at level $\l+1$, we must have $\rI\cup\{i\}$ split across $\P$.
The $\l$ indices of $\rI$ split across $\l$ of the blocks, leaving $r-\l$ blocks from which to choose $i$. 
By Friedman's Theorem, these contribute a total probability of $(r-\l)/r$. As this is true for any $\rI\in\P_\l$, 
we get $(r-\l)/r$ as a common factor times the probability that $\rI$ is split across $\P$, 
thus $u_\l$ is a multiple of $\Omega_\l\ud$ as required.
\end{proof}
The entries of the inclusion operators for the $u$-hierarchy are weighted according to $\pi$.
$$\EP{\Pi}{\l,m}=\begin{cases} {\scriptstyle\prod\limits_{j\in \rJ\setminus \rI}} p_j,&\text{ if } \P_\l\ni\rI\subset\rJ\in\P_m\\ 0,&\text{ otherwise}\end{cases}$$
Note that the $u$ hierarchy are naturally column vectors, so we write the hierarchy relations in the form
$$u_\l^+=\EP{\Pi}{\l,\l+1}u_{\ell+1}^+$$
For $n=5$, we have, for example
$$\EP{\Pi}{1,2}=\left[ \begin {array}{cccccccccc} 
p_2&p_3&p_4&p_5&0&0&0&0&0&0\\
p_1&0&0&0&p_3&p_4&p_5&0&0&0\\
0&p_1&0&0&p_2&0&0&p_4&p_5&0\\
0&0&p_1&0&0&p_2&0&p_3&0&p_5\\
0&0&0&p_1&0&0&p_2&0&p_3&p_4
\end {array} \right] $$
Reading the indices $p_i$ in a column tells you the label for that column, the set $\rJ$.
And for the transition from layer 4 to layer 1, composing one layer at a time, we have
$$\EP{\Pi}{1,4}=\frac{1}{3!}\,\EP{\Pi}{1,2}\EP{\Pi}{2,3}\EP{\Pi}{3,4}=
\left[ \begin {array}{ccccc} p_2p_3p_4&p_2p_3p_5&p_2p_4p_5&p_3p_4p_5&0\\
p_1p_3p_4&p_1p_3p_5&p_1p_4p_5&0&p_3p_4p_5\\
p_1p_2p_4&p_1p_2p_5&0&p_1p_4p_5&p_2p_4p_5\\
p_1p_2p_3&0&p_1p_2p_5&p_1p_3p_5&p_2p_3p_5\\
0&p_1p_2p_3&p_1p_2p_4&p_1p_3p_4&p_2p_3p_4\end {array} \right]
$$
where these act on column vectors.  \bigskip

\begin{example}
Consider the system at the end of \S\ref{sec:sgrpskrnls}. We have the partitions
$$\P_1=\{\{1, 2\}, \{3, 5\},\{4, 6\} \}\,,\qquad \P_2=\{\{1, 6\},\{2, 4\},  \{3, 5\}\}$$
and ranges
$$\R_1=\{1, 3, 4\}\,,\ \R_2=\{1, 4, 5\}\,,\ \R_3=\{2, 3, 6\}\,,\ \R_4=\{2, 5, 6\}$$
with 
$$\alpha=[1/3,2/3]\,,\qquad \text{ and  }\beta=[4/9,2/9,1/9,2/9]\ .$$
Now, for convenience, writing out the indices at level 3:
$$
\begin{matrix}
1&2&3&4&5&6&7&8&9&10\\
123&124&125&126&134&135&136&145&146&156\\
\strut\\
11&12&13&14&15&16&17&18&19&20\\
234&235&236&245&246&256&345&346&356&456
\end{matrix}
$$
we have
$$u_3=\bigl[\frac{2}{3}, 0, \frac{2}{3}, 0, 1, 0, \frac{1}{3}, 1, 0, \frac{1}{3}, \frac{1}{3}, 0, 1, \frac{1}{3}, 0, 1, 0, \frac{2}{3}, 0, \frac{2}{3}\bigr]    $$    
which can be checked directly using $\alpha$. For example, we have $\{1,2,3\}$ split across $\P_2$, but not $\P_1$, so
$u_3( \{1,2,3\}) = \alpha(\P_2)=2/3$ while $u_3(\{1,3,4\}=1$, as it is a range class.   \bigskip

Now, $\pi_3$ directly corresponds to $\beta$,
$$\pi_3= [0, 0, 0, 0, 4/9, 0, 0, 2/9, 0, 0, 0, 0, 1/9, 0, 0, 2/9, 0, 0, 0, 0]\ .$$
Note that for a kernel of rank $r$,
 if $\pi_r$ has a non-zero entry, then the corresponding entry in $u_r$ must equal 1; however, in general the converse does not hold. \bigskip

Rescaling $\pi_3$, the inclusion hierarchy yields
\begin{gather*}
  \pi_3=[0, 0, 0, 0, 4, 0, 0, 2, 0, 0, 0, 0, 1, 0, 0, 2, 0, 0, 0, 0]\\
         \pi_2=[0, 4, 6, 2, 0, 1, 0, 2, 3, 4, 0, 1, 2, 0, 2]\\
           \pi_1= [12, 6, 10, 12, 8, 6]=2\,[6,3,5,6,4,3]
\end{gather*}
For example, $\pi_2(\{2,6\})$, ninth entry from the left, has a weight of $3$, since $\{2,6\}$ is contained in both $\R_3$ and $\R_4$. And
$\pi_1=54\pi$.  \bigskip

Starting with unscaled $u_3$, we have the inclusion hierarchy
\begin{gather*}
u_3=  [2, 0, 2, 0, 3, 0, 1, 3, 0, 1, 1, 0, 3, 1, 0, 3, 0, 2, 0, 2]\\
u_2=\bigl[\frac{2}{3},1,1,1,\frac{1}{3},1,\frac{1}{3},1,1,1,0,1,1,\frac{2}{3},1 \bigr]\\
u_1= [1, 1, 1, 1, 1, 1]
\end{gather*}
For example, the first entry in $u_2$ comes from the fact that $\{1,2,3\}$ and $\{1,2,5\}$ are split in $\P_2$. 
We get
$$u_2(\{1,2\})=2\times \frac{5}{27}+2\times \frac{4}{27}=\frac{2}{3}$$
where in the first term, the  value $5/27=p_3$, the probability of the deleted vertex.
The $0$ entry in $u_2$ shows that the pair $\{3,5\}$ is collapsed in both partitions.
\end{example}
\end{section}
\begin{section}{Applications}
\begin{subsection}{Finding the rank of the kernel}
Once Friedman's theorem has been put into the current framework as in \cite{BuMu}, for this application, we
express it as follows:
\begin{theorem}[\cite{BuMu,FRC}, version 2]\label{thm:bmf2}
For $k\in\K$, let $\rho(k)$ denote the $0$-$1$ vector with support the range of $k$. Then
$$\pi k=\frac{1}{r}\rho(k)$$
where $r$ is the rank of the kernel.
\end{theorem}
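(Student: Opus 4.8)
The plan is to compute $\pi k$ one coordinate at a time directly from the matrix form of a kernel element, and then to recognize each nonzero coordinate as the $\pi$-mass of a block, so that Theorem~\ref{thm:bmf1} finishes the job. Recall that an element $k\in\K$ is the matrix of a function whose partition $\P=\{\B_1,\dots,\B_r\}$ has $r$ blocks and whose range $\R$ has exactly $r$ vertices, each block $\B_s$ being carried bijectively onto one range vertex. Under the convention $k_{ij}=1\iff k(i)=j$, the column indexed by a range vertex $j$ is precisely the $0$-$1$ indicator of the block $k^{-1}(j)$, while every column indexed by $j\notin\R$ is identically zero.

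First I would fix a coordinate $j$ and write $(\pi k)_j=\sum_i p_i\,k_{ij}=\sum_{i\,:\,k(i)=j}p_i=\pi\bigl(k^{-1}(j)\bigr)$. If $j\notin\R$ then $k^{-1}(j)=\emptyset$, so $(\pi k)_j=0$, in agreement with $\frac{1}{r}\rho(k)_j$ since $\rho(k)_j=0$ by the definition of $\rho$. If $j\in\R$, then $k^{-1}(j)$ is one of the blocks $\B$ of the partition $\P$ of $k$, and Theorem~\ref{thm:bmf1} gives $\pi(\B)=\frac{1}{r}$; as $\rho(k)_j=1$ in this case, again $(\pi k)_j=\frac{1}{r}\rho(k)_j$. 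Assembling all coordinates yields $\pi k=\frac{1}{r}\rho(k)$.

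The substance of the argument is carried entirely by Version~1: the one nonformal fact is that the $\pi$-mass of a block equals $1/r$ uniformly, independent of which partition and which kernel element $k$ is chosen, and this is exactly the content of Theorem~\ref{thm:bmf1}. Consequently I do not anticipate a genuine obstacle; the only point needing care is the bookkeeping identification of the nonzero columns of $k$ with the blocks of its partition and of its zero columns with the vertices outside the range. As a consistency check, since the blocks $\B_1,\dots,\B_r$ partition all $n$ vertices one has $\sum_{s=1}^r\pi(\B_s)=\sum_i p_i=1$, matching $r\cdot\frac{1}{r}=1$, and $\rho(k)$ has exactly $r$ ones, so the total mass of $\pi k$ is $1$, as it must be for a stochastic $k$.
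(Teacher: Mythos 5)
Your proof is correct. The paper gives no explicit proof of this statement---it presents version~2 as a direct reformulation of Friedman's theorem (Theorem~\ref{thm:bmf1}), citing \cite{BuMu,FRC}---and your coordinate computation $(\pi k)_j=\pi\bigl(k^{-1}(j)\bigr)$, together with the identification of the nonzero columns of $k$ with the blocks of its partition, is precisely the bookkeeping that this reformulation leaves implicit, so you have taken essentially the intended route.
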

\begin{corollary} For any $k\in\K$, we have
$$\pi kk^\top=\frac{1}{r}\,u$$
\end{corollary}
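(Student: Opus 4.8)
The plan is to peel off one factor of $k$ using the preceding theorem and then reduce the remaining claim to an elementary observation about function matrices. By Theorem~\ref{thm:bmf2} we have $\pi k=\frac{1}{r}\rho(k)$, so that $\pi kk^\top=\frac{1}{r}\rho(k)k^\top$. Everything therefore comes down to verifying the single identity $\rho(k)k^\top=u$.

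To see this I would compute the $i$-th entry directly. Since $k\in\K$ is the matrix of a function $f$, each row of $k$ is a standard basis vector: $k_{ij}=1$ exactly when $j=f(i)$, and $k_{ij}=0$ otherwise. Hence $(\rho(k)k^\top)_i=\sum_j \rho(k)_j\,k_{ij}=\rho(k)_{f(i)}$. Now $f(i)$ belongs to the range of $k$, and the range of $k$ is by definition the support of $\rho(k)$; thus $\rho(k)_{f(i)}=1$. As this holds for every $i$, we obtain $\rho(k)k^\top=u$, and combining with the first step yields $\pi kk^\top=\frac{1}{r}u$.

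The content here is entirely in the first step, the appeal to Theorem~\ref{thm:bmf2} (Friedman's theorem in the form $\pi k=\frac{1}{r}\rho(k)$), which is where the invariance of $\pi$ and the rank $r$ of the kernel actually enter. The second step is essentially bookkeeping and presents no real obstacle: the only point to keep in mind is that the unique nonzero entry in each row of $k$ necessarily sits in a column indexed by the range, which is precisely where $\rho(k)$ is supported, so no row can fail to contribute a $1$. In particular the computation uses nothing about $k$ beyond its being a function matrix whose image equals the support of $\rho(k)$, and $\pi$ enters only through the already-established identity $\pi k=\frac{1}{r}\rho(k)$.
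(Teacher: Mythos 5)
Your proof is correct and follows essentially the same route as the paper: the paper also combines Theorem~\ref{thm:bmf2} with the identity $\rho(k)k^\top=u$, which it verifies in transposed form by noting that each row of $k$ has its single $1$ in a column supporting $\rho(k)^+$, so $k\rho(k)^+=u^+$. Your entrywise computation $(\rho(k)k^\top)_i=\rho(k)_{f(i)}=1$ is just a more explicit rendering of that same observation.
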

\begin{proof}
Consider $k\rho(k)^+$. In each row of $k$, the entry equal to one is precisely in one of the range spots supporting $\rho(k)^+$. So every row
contributes a 1. Now transpose.
\end{proof}
We may first calculate $u_2$ by finding $\Omega_2$, say as an Abel limit. Then, using Proposition \ref{prop:kkstar}, we have
$J-\avg KK^\top.$. Form an $n\times n$ matrix from the components of $u_2$ and subtract from $J$ to get $\avg KK^\top.$. Thus,
$$\pi\avg KK^\top.=\avg \pi KK^\top.=\frac{1}{r}\,u$$
by the Corollary above. \bigskip

\begin{example}
Continuing the example from the end of the previous section, we form the matrix $\hat u_2$ from $u_2$:
$$\begin{bmatrix}0& 2/3& 1& 1& 1& 1/3\\2/3& 0& 1& 1/3& 1& 1\\
1& 1& 0& 1& 0& 1\\1& 1/3& 1& 0& 1& 2/3\\ 1& 1& 0& 1& 0& 1\\ 1/3& 1& 1& 2/3& 1& 0\end{bmatrix}
$$
Note that the diagonal consists of all zeros and the matrix is symmetric. Now form $J-\hat u_2$ and multiply by $\pi$ on the left:
$$\frac{1}{27}\,[6,3,5,6,4,3]\begin{bmatrix} 1& 1/3& 0& 0& 0& 2/3\\1/3& 1& 0& 2/3& 0& 0\\
0& 0& 1& 0& 1& 0\\0& 2/3& 0& 1& 0& 1/3\\ 0& 0& 1& 0& 1& 0\\ 2/3& 0& 0& 1/3& 0& 1 \end{bmatrix}=
\bigl[\frac{1}{3},\frac{1}{3},\frac{1}{3},\frac{1}{3},\frac{1}{3},\frac{1}{3}\bigr]=\frac{1}{r}u $$
verifying that the rank is 3.
\end{example}
\end{subsection}
\begin{subsection}{Right groups}
Right groups, \S\ref{ssec:kms}, (2), are characterized by the kernel having one partition, so the measure $\alpha$ is equal to one on a single
partition. So, in Proposition \ref{prop:kkstar}, we have the single partition with any of the corresponding idempotents $e$ satisfying
$$u_2(i,j)=1-(ee^\top)_{ij}$$ 
Recalling equation \eqref{eq:kk}, we see that $u_2(\rI)$ takes only the values $0$ or $1$.  Conversely, if there are more than one partition,
some pair must be split in one but not the other. Then the value of $u_2$ on that pair would be positive and strictly less than one. Thus,
\begin{proposition} The kernel is a right group if and only if $u_2(\rI)$ takes only the values $0$ and $1$.
\end{proposition}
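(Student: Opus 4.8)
The proposition asserts a clean dichotomy: the kernel is a right group precisely when the field $u_2$ is $0$-$1$ valued. The plan is to prove both implications directly from the probabilistic interpretation of $u_2$ already established in the excerpt, namely that
$$u_2(\rI) = \text{Prob}(\rI \text{ is split across a random partition}) = \sum_{\rI \text{ split in } \P} \alpha(\P).$$
The structural fact driving everything is that a right group is, by the discussion in \S\ref{ssec:kms}(2), exactly a kernel supported on a single partition, i.e.\ the measure $\alpha$ is a point mass.

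\textbf{Forward direction.} First I would assume the kernel is a right group, so there is a unique partition $\P_0$ with $\alpha(\P_0)=1$. Then the sum defining $u_2(\rI)$ collapses to a single term: $u_2(\rI)$ equals $1$ if the pair $\rI$ is split across $\P_0$ and $0$ if it collapses in $\P_0$. This is precisely the statement, via equation \eqref{eq:kk}, that $u_2(i,j) = 1 - (ee^\top)_{ij}$ for any idempotent $e$ carrying the partition $\P_0$, since $(ee^\top)_{ij}$ is $1$ exactly when $(i,j)$ collapse in $\P_0$. Hence $u_2$ takes only the values $0$ and $1$.

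\textbf{Converse.} For the contrapositive, I would suppose the kernel is \emph{not} a right group, so there are at least two distinct partitions, say $\P_1 \ne \P_2$, each with positive $\alpha$-measure. Distinct partitions of the same underlying blocks must differ in how they group some pair of vertices, so I would produce a pair $\rI=(i,j)$ that is split in one partition but collapses in the other. For such a pair the defining sum $\sum_{\rI \text{ split in }\P}\alpha(\P)$ picks up strictly positive mass from the partition where $\rI$ splits but misses the mass of the partition where it collapses; since both masses are positive and sum (with the rest) to $1$, the value $u_2(\rI)$ is strictly between $0$ and $1$. Thus $u_2$ is not $0$-$1$ valued, completing the contrapositive.

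\textbf{Main obstacle.} The only genuinely nontrivial step is the converse's claim that two distinct partitions of the kernel must disagree on \emph{some} pair of vertices. This needs the observation that all partitions in the kernel partition the same set (all $n$ vertices, or more precisely each groups vertices by which range-element they are sent to), and that two set-partitions coinciding on every pair $\{i,j\}$ (same pairs together, same pairs apart) are identical. This is elementary but should be stated explicitly, as it is exactly what guarantees the existence of a splitting pair witnessing $0 < u_2(\rI) < 1$. Everything else is a direct reading of the probabilistic formula for $u_2$ together with \eqref{eq:kk}.
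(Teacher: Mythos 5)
Your proof is correct and follows essentially the same route as the paper: the forward direction reads $u_2(i,j)=1-(ee^\top)_{ij}$ off the single partition via Proposition \ref{prop:kkstar} and equation \eqref{eq:kk}, and the converse argues by contrapositive that two distinct partitions must disagree on some pair, whose $u_2$-value is then strictly between $0$ and $1$. The only difference is that you spell out explicitly why distinct partitions must differ on a pair (a set partition is determined by its induced relation on pairs), a step the paper asserts without comment.
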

The pairs $\rI$ for which $u_2(\rI)=0$ are precisely those pairs in the same block. So in this case, the partition may be read off from $u_2$. \bigskip

\begin{example}
For the continued rank 3 example, we have seen that $u_2$ does not consist of only 0's and 1's. Hence the kernel is not a right group in agreement
with the known two partitions. Here is an example of a rank 4 right group. Take $r=[311644]$, $b=[544123]$. With $A=(1/2)(R+B)$,
the invariant distribution is $\pi=[4,1,3,4,2,2]/16$. And we find 
$$u_2= [1, 1, 1, 1, 1, 0, 1, 1, 1, 1, 1, 1, 1, 1, 0]$$
indicating that the kernel is a right group. Forming the corresponding matrix and subtracting from $J$, we calculate
$$\frac{1}{16}\,[4,1,3,4,2,2]\begin{bmatrix}
1& 0& 0& 0& 0& 0\\ 0& 1& 1& 0& 0& 0\\ 0& 1& 1& 0& 0& 0\\ 0& 0& 0& 1& 0& 0\\ 0& 0& 0& 0& 1& 1\\ 0& 0& 0& 0& 1& 1 \end{bmatrix}
=\bigl[\frac{1}{4},\frac{1}{4},\frac{1}{4},\frac{1}{4},\frac{1}{4},\frac{1}{4}  \bigr]=\frac{1}{4}\,u\ .$$
From the matrix $\hat u_2$ or this last, $J-\hat u_2$, we can immediately read off the blocks of the partition:
$$\P_1=\{\{1\},\{2,3\},\{4\},\{5,6\}\}$$
To find the range classes, we need $\pi_4$. We form 
$$A_4=(1/2)(\spow{R}{4}+\spow{B}{4})$$ and find a left invariant eigenvector, spanning the nullspace of $J-A_4$. We normalize to
$$\pi_4=\beta=[0, 0, 0, 1/4, 0, 0, 1/4, 2/4, 0, 0, 0, 0, 0, 0, 0]$$
with range classes 
$$\R_1=\{1,2,4,5 \}\,,\quad \R_2=\{1,3,4,5\}\,,\quad \R_3=\{1,3,4,6\}\ .$$
\end{example}
\end{subsection}
\begin{subsection}{Rank $n-1$ kernels}
Using Theorem \ref{thm:bmf1} and invariance of $\pi$, we will find the general form of $\pi$ for a kernel of rank $n-1$. 
It will be apparent that the kernel must be a right group. 
The up-down symmetry of Propositions \ref{prop:enminusone} and \ref{prop:nmo} will give us the measure $\beta$ and the range classes.
We consider the case of two colors only, typically denoting $C_1=R$, for red, and $C_2=B$ for blue. \bigskip

Consider a partition in the kernel. We have $n-1$ blocks. So two vertices are
in one block, with the rest in singletons. Number the doubleton pair vertices 1 and 2. Thus,
$$\P=\{\{1,2\},\{3\},\ldots,\{n\}\}\ .$$
By Theorem \ref{thm:bmf1}, each block has $\pi$-measure $1/(n-1)$.  Thus,
$$\pi=[\frac{1}{n-1}-q,q,\frac{1}{n-1},\frac{1}{n-1},\ldots,\frac{1}{n-1}]$$
for some $0<q<\frac{1}{n-1}$. In particular, $p_1$ and $p_2$ are strictly less than $\frac{1}{n-1}$.\bigskip

Since the graph is 2-out regular, the total in-degree is $2n$. The main point is to consider vertices of in-degree 3, equivalently in-degree 1.
The invariance of $\pi$ gives the equation
\begin{equation}\label{eq:inv}
2p_i=\sum_{j\to i} p_j
\end{equation}

at each vertex $i$. We make a series of observations: \bigskip

1. There are no vertices of degree exceeding 3, since eq. \eqref{eq:inv} yields
$$2p_i\ge \frac{1}{n-1}-q+q+2\frac{1}{n-1}=\frac{3}{n-1}$$
which possibility is eliminated. \bigskip

2. Not all vertices are of degree 2, the doubly stochastic case, since $q<\frac{1}{n-1}$. So there is at least one vertex of degree 3, correspondingly,
one of indegree 1. Let $s$ denote a vertex of in-degree 1. Then if $j\to s$, $p_j=2p_s$. So $s$ must be 1 or 2. Say it's vertex 1. There are two cases: \medskip

a. $2\to1$. In this case, 
$$q=2(\frac{1}{n-1}-q)\qquad\Rightarrow\qquad p_2=q=\frac{2/3}{n-1} \,,\text{ and  }p_1=\frac{1/3}{n-1}\ .$$
Note that $2$ has in-degree at least two, since incoming $j$ would have $p_j$ equal to $\frac{4/3}{n-1}$. \medskip

b. $j\to 1$, where $p_j=\frac{1}{n-1}$. So $p_1=p_2=\frac{1/2}{n-1}$ are equally likely. Both $1$ and $2$ have in-degree 1 in this case and neither
maps to the other.\bigskip

This gives us two possibilities. Now consider the coloring matrices $R$ and $B$. 
There are two ways to generate a kernel of rank $n-1$. In one case, $R$, say, is a permutation and $B$ has rank $n-1$. In the other case,
they both are rank $n-1$, so are already in the kernel, and they generate it. \bigskip

Note that lumping vertices 1 and 2, we would have $n-1$ vertices and a partition of all singletons, corresponding to a doubly stochastic system.
Now, to get case a., start with a doubly stochastic graph where
exactly one vertex, 1,  has a loop, from which we will create an edge from $2\to1$. Split it into two vertices keeping the in-degree of vertex
1 equal to one, as in Figure~1. In this case, one of the colorings is a permutation and the other has rank $n-1$. \bigskip

Case b. comes from a doubly stochastic system with no loops by splitting vertex 1 into two vertices of in-degree one, as in Figure~2.
The result is two rank $n-1$ generators.  \bigskip

Here is another way to look at these constructions. Recall the notation for a function, \S\ref{sssec:not}, using the ordered range to describe it.
Since we want to start with $n-1$ vertices and add a new vertex labelled 2, we will start with vertices labelled 1 through $n-1$, increase all of the
labels by 1, losing vertex 1. Then we will recover vertex 1 according to which case we want.  \bigskip

\begin{figure}[ht!]
  \centering
 \subfloat[Doubly stochastic with a loop]{\scalebox{.70}{\includegraphics{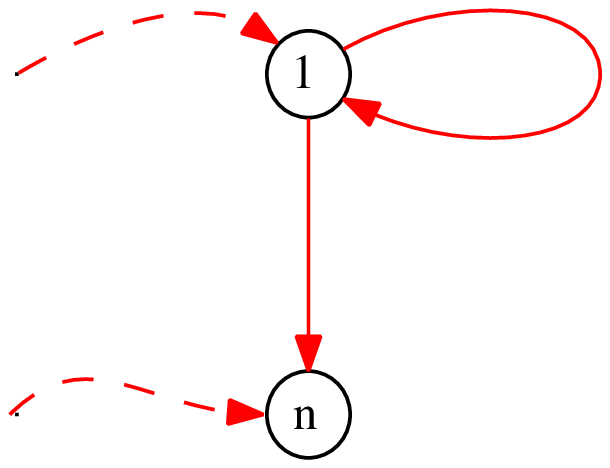}}}
  \subfloat[Rank $n-1$]{\scalebox{.70}{\includegraphics{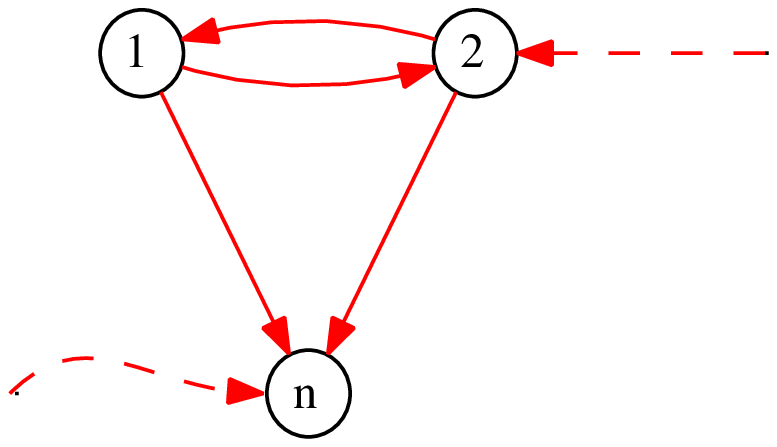}}}
  \caption{Triangular splitting}
\end{figure}

\begin{figure}[ht!]
  \centering
  \subfloat[Doubly stoch. no loop]{\scalebox{.70}{\includegraphics{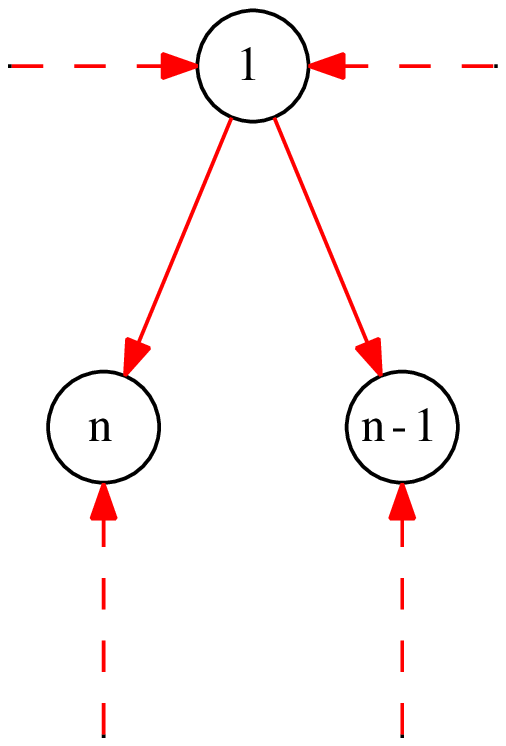}}} \phantom{-----------}               
  \subfloat[Rank $n\!-\!1$]{\scalebox{.70}{\includegraphics{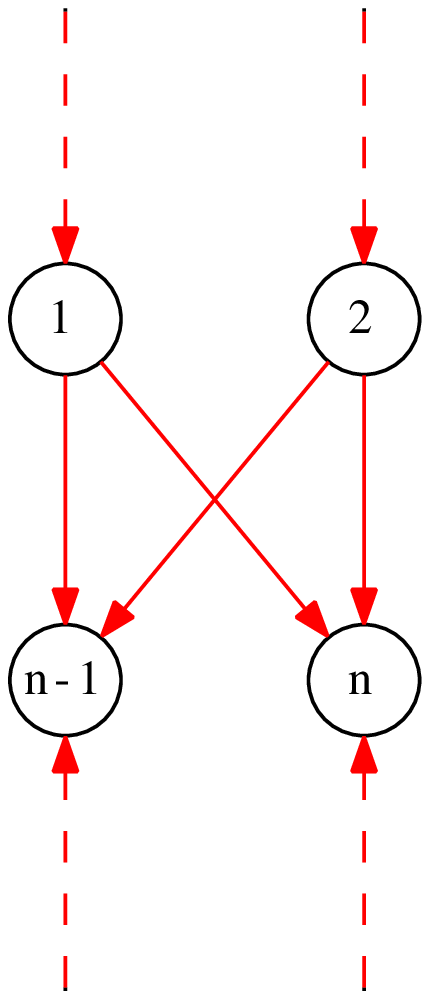}}}
  \caption{Cross splitting}
\end{figure}

Given $r=[r_1r_2\ldots r_{n-1}]$ and $b=[b_1b_2\ldots b_{n-1}]$, two permutations, with the matrices $R+B=2A$, the adjacency matrix for a doubly
stochastic, 2-out regular graph. Case a. means that vertex 1 is the unique vertex with a  loop, case b. holds when there are no loops. Then: \bigskip

1. Write the labels for $r$ and $b$ as a 2-rowed block. Increase all labels by 1.\medskip

2. Duplicate the first column.\medskip

3. For case a., change the ``2" coming from the loop into a ``1", creating a 2-cycle $1\leftrightarrow2$.
 In case b., pick any ``2" and change it to a ``1".  \bigskip

\begin{example}
1. For case a., consider $r=[162345]$, $b=[345612]$. The procedure looks like this:
$$\begin{matrix} 162345 \\ 345612 \end{matrix}\quad\to\quad \begin{matrix} 273456 \\ 456723 \end{matrix}
\quad\to\quad \begin{matrix} 2273456 \\ 4456723 \end{matrix}\quad\to\quad \begin{matrix} 2173456 \\ 4456723 \end{matrix}$$
with $r$ remaining a permutation. We have
$$A= \left[ \begin {array}{ccccccc} 0&1/2&0&1/2&0&0&0\\
1/2&0&0&1/2&0&0&0\\
0&0&0&0&1/2&0&1/2\\
0&0&1/2&0&0&1/2&0\\
0&0&0&1/2&0&0&1/2\\
0&1/2&0&0&1/2&0&0\\
0&0&1/2&0&0&1/2&0\end {array} \right]$$
with invariant distribution
$$\pi=[1/18,1/9,1/6,1/6,1/6,1/6,1/6]\ .$$

2. Here is an example for case b. Start with 3 vertices, $r=[312]$, $b=[231]$.  And we have:
$$\begin{matrix} 312 \\231 \end{matrix}\quad\to\quad\begin{matrix} 423 \\342 \end{matrix}
\quad\to\quad \begin{matrix} 4423 \\3342 \end{matrix}\quad\to\quad \begin{matrix} 4423 \\3341 \end{matrix}$$
with both $r$ and $b$ in the kernel. Here,
$$A= \left[ \begin {array}{ccccccc} 0&0&1/2&1/2\\0&0&1/2&1/2\\
0&1/2&0&1/2\\1/2&0&1/2&0\end {array} \right]$$
with invariant distribution
$$\pi=[1/6,1/6,1/3,1/3]\ .$$
\end{example}

Now, we apply the up-down symmetry to compute $\pi_r$. From Proposition \ref{prop:enminusone}, we have, up to a scale factor $c$,
\begin{align*}
c\,\pi_r&=\left[\frac{1}{n-1}-q,q,\frac{1}{n-1},\ldots,\frac{1}{n-1}\right]\,(\frac{1}{n-1}J-I')\\
&=\frac{1}{n-1}\,u-\left[\frac{1}{n-1},\ldots,\frac{1}{n-1},q,\frac{1}{n-1}-q\right]\\
&=[0,\ldots,0,\frac{1}{n-1}-q,q]
\end{align*}
So, in case a., where, only 1 has in-degree one, we have
$$\beta=[0,\ldots,0,1/3,2/3]$$
and when both have in-degree one, we have
$$\beta=[0,\ldots,0,1/2,1/2]\ .$$
In both cases, $\beta$ is concentrated on two range classes corresponding to the fact that 1 and 2 are collapsed in the kernel:
$$ \R_1=\{1,3,4,5,\ldots,n\}\qquad\text{ and  } \qquad \R_2=\{2,3,4,5,\ldots,n\}\ .$$
We can also write down $u_2$ immediately. Since all pairs except $\{1,2\}$ split, we have
$$u_2=[0,1,1,1,\ldots,1]\ .$$
The hierarchy operator yields $u_1$ with all components equal to $\frac{n-2}{n-1}$, which scales correctly to $u$. For example, for $j\ge3$,
$$(u_1)_j=p_1+p_2+\frac{1}{n-1}\,\sum_{3\le k\ne j}p_k=\frac{1}{n-1}+\frac{n-3}{n-1}=\frac{n-2}{n-1}$$
with the 1 and 2 components yielding $\frac{n-2}{n-1}$ immediately.
\end{subsection}
\end{section}
\begin{section}{Conclusion}
In this work, we have presented techniques for analyzing the structure of a completely simple semigroup in conjunction with idempotent measures
that it supports. Extending the action of a function on a set from points to subsets, a hierarchy of matrices and corresponding vectors is constructed
according to the sets which are mapped one-to-one by the given function. The mappings from a function to its action on $\l$-subsets of a given
$n$-set yield  homomorphisms of the transformation semigroup of functions acting on the set.  \bigskip

In the context of colorings of digraphs, decomposing the adjacency matrix into binary stochastic matrices corresponding to functions on the vertices,
the associated hierarchy constructions yield a hierarchy of kernels, one at each level, as well as sequences of vectors extending the left and right invariant
vectors of the corresponding Markov chain on the vertices. \bigskip

These topics extend in several directions. A natural setting is to study tensor hierarchies corresponding to tensor powers and 
various restricted tensor products, such as symmetric or antisymmetric tensors. Another direction is to study level 2 representations in depth. 
These  techniques allow one to study many features of permutation groups, cf. \cite{CA}, in the setting of transformation semigroups.
\end{section}
\begin{appendix}
\section{Convergence of convolution powers of a measure on a finite semigroup}\label{app:1}
For probability measures $\mu_1$, $\mu_2$ on the finite semigroup $\SG$, define the norm
$$\|\mu_1-\mu_2\|=\sup_{w\in\SG} |\mu_1(w)-\mu_2(w)|$$
the usual sup norm, in this case the same as the variational norm. \bigskip

\begin{theorem}\cite[Th. 2.13]{HM} \label{thm:conv}\medskip

Given a probability measure $\mu$ on $\SG$ whose support generates $\SG$. Let $\mu^{(k)}$ denote the $k^{\rm th}$ convolution power of $\mu$.
Then \bigskip

1. The sequence $\displaystyle \mu_n=\frac{1}{n} \sum\limits^n_{k=1} \mu^{(k)}$ converges to a probability measure $\lambda$ which satisfies the convolution
relations 
$$\lambda= \lambda *\lambda = \mu *\lambda = \lambda * \mu\ .$$

2. The support of $\lambda$ is the kernel $\K$ of the semigroup $\SG$. Let the Rees product decomposition of $\K$ be
$\ms.X.\times\ms.G.\times\ms.Y.$. Then $\lambda$ has the product decomposition
$$\lambda=\alpha\times\omega\times\beta$$
where $\alpha$ is an $\SG$-invariant measure on $\ms.X.$, $\omega$ is Haar measure on $\ms.G.$, 
and $\beta$ is an $\SG$-invariant measure on $\ms.Y.$.
\end{theorem}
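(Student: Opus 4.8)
The plan is to work entirely inside the finite-dimensional semigroup algebra of Section 3, identifying $\mu$ with $\hat\mu=\sum_w\mu(w)\,w$ and $\mu^{(k)}$ with $\hat\mu^{\,k}$. Three ingredients are needed: a mean-ergodic convergence argument for stochastic operators (Part 1), the structure theorem for idempotent measures, Theorem \ref{thm:idem} (which supplies the product form), and an ideal argument pinning the support to the kernel $\K$ (Part 2).

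First I would establish convergence. Left convolution by $\mu$, namely $T\colon\nu\mapsto\mu*\nu$, is a linear endomorphism of the $|\SG|$-dimensional space of signed measures; since the convolution of probability measures is again a probability measure, $T$ is represented by a (column-)stochastic matrix. By the standard mean-ergodic theorem for stochastic matrices the Cesàro averages $\frac1n\sum_{j=0}^{n-1}T^{\,j}$ converge, the whole point of passing to Cesàro means being to absorb any unimodular eigenvalue $\neq1$, which is automatically semisimple. Applying the limiting projection to $\mu$ gives $\mu_n\to\lambda$ for a probability measure $\lambda$. The convolution identities then follow by telescoping: from $\mu*\mu_n=\mu_n-\tfrac1n\mu^{(1)}+\tfrac1n\mu^{(n+1)}$ and $n\to\infty$ I obtain $\mu*\lambda=\lambda$, and symmetrically $\lambda*\mu=\lambda$; iterating $\mu^{(k)}*\lambda=\lambda$ and averaging yields $\lambda*\lambda=\lambda$.

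Next, since $\lambda$ is idempotent for convolution, Theorem \ref{thm:idem} applies: $\lambda$ is supported on a completely simple subsemigroup $\K'$ with Rees decomposition $\ms.X.\times\ms.G.\times\ms.Y.$ and $\lambda=\alpha\times\omega\times\beta$. It remains to identify $\K'$ with $\K$. Writing $S=\mathrm{supp}(\lambda)$ and $M=\mathrm{supp}(\mu)$, nonnegativity of the measures gives $\mathrm{supp}(\mu*\lambda)=MS$, so $MS=S$; iterating, $M^kS=S$ for every $k$, and since $M$ generates $\SG$ this forces $\SG\,S=S$. Symmetrically $S\,\SG=S$, so $S$ is a two-sided ideal of $\SG$ and therefore contains the minimal ideal, $\K\subseteq S$. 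Because $\K$ is an ideal of $\SG$ it is in particular an ideal of the subsemigroup $S=\K'$; but $\K'$, being completely simple, is simple, so its only nonempty ideal is itself, whence $\K'=\K$. The decomposition of $\lambda$ is then exactly the claimed $\alpha\times\omega\times\beta$ on $\K$.

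Finally, invariance of the marginals follows from the one-sided relations. Left multiplication by any element of $\SG$ sends a kernel element into the same minimal left ideal $\K k$ of §3.1, hence preserves its range, i.e. fixes the $\ms.Y.$-coordinate and acts only on $\ms.X.$; read through the Rees coordinates, $\mu*\lambda=\lambda$ then says precisely that $\alpha$ is $\SG$-invariant, and dually $\lambda*\mu=\lambda$ gives invariance of $\beta$. I expect the identification $\K'=\K$ to be the crux: the convergence is routine Markov-chain theory and the decomposition is immediate from Theorem \ref{thm:idem}, but forcing the support down to the minimal ideal is where the hypothesis that $\mathrm{supp}(\mu)$ generates $\SG$ must be combined carefully with complete simplicity.
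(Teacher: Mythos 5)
Your proposal is correct, but it reaches both halves of the theorem by routes that differ from the paper's. For Part 1, the paper does not use the mean ergodic theorem: it derives the same telescoping bound $\|\mu_n-\mu*\mu_n\|\le\frac{1}{n}\|\mu-\mu^{(n+1)}\|\to0$, takes an arbitrary limit point $\lambda$ of the bounded sequence $(\mu_n)$, shows $\lambda=\mu^{(k)}*\lambda=\lambda*\mu^{(k)}$ for all $k$, hence $\mu_n*\lambda=\lambda$, and then proves uniqueness of the limit point outright: any second limit point $\lambda'$ satisfies both $\lambda=\lambda'*\lambda$ and $\lambda'=\lambda'*\lambda$, so $\lambda'=\lambda$. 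That compactness-plus-uniqueness argument is the one that carries over verbatim to compact semigroups, which is the setting of \cite{HM}; your spectral argument (power-boundedness forces the unimodular spectrum to be semisimple, so Ces\`aro averages converge to the eigenvalue-one projection) is confined to the finite-dimensional case but yields convergence in one stroke, after which you recover the convolution identities by the same telescoping the paper uses. For Part 2 the comparison runs the other way: the paper proves only Part 1 in detail and settles Part 2 by citing Theorem \ref{thm:idem} --- even though that theorem, as stated, only places $\lambda$ on a completely simple subsemigroup $\K'$, not on the kernel. Your ideal-theoretic identification supplies the missing step: $\mathrm{supp}(\mu*\lambda)=\mathrm{supp}(\mu)\,\mathrm{supp}(\lambda)$ by nonnegativity, so the hypothesis that $\mathrm{supp}(\mu)$ generates $\SG$ makes $\mathrm{supp}(\lambda)$ a two-sided ideal; every ideal of a finite semigroup contains the minimal ideal, so $\K\subseteq\K'$; and since $\K$ is then an ideal of the simple semigroup $\K'$, in fact $\K'=\K$. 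This is precisely where the generation hypothesis is consumed, and it is the piece the paper leaves inside the citation. Your closing sketch of the $\SG$-invariance of $\alpha$ and $\beta$ (left multiplication preserves minimal left ideals, hence fixes the range coordinate; dually on the right) is consistent with the Rees bookkeeping of \S\ref{ssec:kms}, though there, like the paper, you are still deferring the last details to \cite{HM}.
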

We will give the proof of \#1 in detail.
\begin{proof}
We have
\[
\begin{aligned}
       \mu * \mu_n &= \mu * \bigl( \frac{1}{n}
          \sum^n_{k=1} \mu^{(k)} \bigr)                            \\
       &= \frac{1}{n} \sum^n_{k=1} \mu^{(k+1)}                   \\
       &= \frac{1}{n} (\mu^{(2)} + \mu^{(3)} + \ldots  + \mu^{(n+1)})\\
&=\mu_n*\mu
\end{aligned}
\]
So 
$$\mu_n = \mu * \mu_n + \frac{1}{n} [\mu - \mu^{(n+1)}] \quad 
\text{ and  }\quad \|\mu_n - \mu*\mu_n \| = \frac{1}{n} \|\mu-\mu^{(n+1)}\|\ .$$
and similarly for $\mu_n*\mu$.
Thus,
\[
\begin{aligned}
 \lim_{n\to\infty} \|\mu_n - \mu*\mu_n\| &= \lim_{n\to\infty} \frac{1}{n}\|\mu-\mu^{(n+1)}\|  \\
       &\le \lim_{n\to\infty} \frac{1}{n} \bigl(\|\mu\| + \|\mu^{(n+1)}\| \bigr)                  \\
       &= 0.
\end{aligned}
\]
Let $\lambda$ be any limit point of $(\mu_n)$, converging along the subsequence $\{n_i\}$. Now
\[
     \mu_{n_i}(w) = \mu*\mu_{n_i}(w) + \frac{1}{n_i} [\mu(w) - \mu^{(n_i+1)}(w)] \ .
\]
with a similar equation for  $\mu_{n_i}*\mu$. Letting $n_i\to\infty$, yields $$\lambda = \mu * \lambda =\lambda*\mu $$
hence
$$\mu * \lambda = \mu^{(2)} * \lambda = \lambda  \quad  \text{ and } \quad \lambda = \mu^{(k)} * \lambda = \lambda * \mu^{(k)} \text{, for all }k>0\ .$$
Then
\[
\begin{aligned}
       \mu_n * \lambda &= \left( \frac{1}{n} \sum^n_{k=1} \mu^{(k)} \right) * \lambda                             \\
       &= \frac{1}{n} \sum^n_{k=1}\mu^{(k)} * \lambda                                     \\
       &= \frac{1}{n} (n\lambda) = \lambda.
\end{aligned}
\]
Suppose $(\mu^{(n)})$ has another limit point $\lambda'$, where $\mu_{m_i} \to \lambda'$. Then
$$\forall n,\ \lambda = \mu^{(n)} * \lambda \Rightarrow\, \lambda = \mu_{m_i} * \lambda  $$

and letting $m_i\to\infty$, $$\lambda = \lambda' * \lambda\ .$$
 Similarly,   
$$\forall n,\ \lambda' = \lambda' * \mu^{(n)} \Rightarrow\, \lambda' = \lambda' * \mu_{n_i} $$
and letting $n_i\to\infty$,
 $$\lambda' = \lambda' * \lambda\ .$$
Thus, $\lambda' = \lambda$. 
We have $\lambda * \lambda = \lambda$, an idempotent measure on $\SG$. Theorem \ref{thm:idem} says that $\lambda$
has support the kernel of $\SG$, a completely simple semigroup with a product structure
$\ms.X. \times \ms.G. \times \ms.Y.$. Hence $\lambda$ has the stated form.
\end{proof}

\section{Abel limits}\label{app:2}
We are  working with stochastic and substochastic matrices. We give the details here on the
existence of Abel limits. Let $P$ be a substochastic matrix. The entries $p_{ij}$ satisfy
$0\le p_{ij}\le1$ with $\sum_jp_{ij}\le1$. Inductively, $P^n$ is substochastic for every integer $n>0$, with
$P^n$ stochastic if $P$ is.

\begin{proposition}     
Let $P$ be substochastic. The Abel limit
$$\Omega=\lim_{s\uparrow1}\, (1-s)(I-sP)^{-1}$$
exists and satisfies
$$ \Omega=\Omega^2=P\Omega=\Omega P$$
\end{proposition}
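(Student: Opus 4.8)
The plan is to reduce everything to the behaviour of the Neumann series and then read off the algebraic identities. For $0\le s<1$ the spectral radius of $sP$ is at most $s<1$, so $I-sP$ is invertible and $(I-sP)^{-1}=\sum_{n\ge0}s^nP^n$; hence $\Omega_s:=(1-s)(I-sP)^{-1}=(1-s)\sum_{n\ge0}s^nP^n$ is exactly the Abel mean of the power sequence $(P^n)_{n\ge0}$. Two things must then be shown: that $\Omega_s$ has a limit as $s\uparrow1$, and that this limit obeys the four relations. I would treat existence first, since it is the real obstacle, and then obtain the identities almost for free from the resolvent identity.

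The hard part is existence. Since $0\le P_{ij}$ with row sums $\le1$, every power satisfies $\|P^n\|_\infty\le1$, so the sequence $(P^n)$ is bounded. Power-boundedness forces each eigenvalue of $P$ on the unit circle to be semisimple; in particular the resolvent $z\mapsto(zI-P)^{-1}$ has at worst a simple pole at $z=1$, whose residue is the spectral projection $\Pi$ onto $\ker(I-P)$. Writing $I-sP=s(s^{-1}I-P)$, so that $\Omega_s=(1-s)s^{-1}(s^{-1}I-P)^{-1}$, and letting $z=s^{-1}\downarrow1$, the simple pole contributes $(1-s)s^{-1}\cdot\dfrac{s\,\Pi}{1-s}=\Pi$ while the holomorphic part is annihilated by the factor $(1-s)$; thus $\Omega=\lim_{s\uparrow1}\Omega_s=\Pi$. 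Equivalently, one may note that the Ces\`aro means $\tfrac1N\sum_{n<N}P^n$ converge to $\Pi$, the eigenvalue $1$ being semisimple while every other unit eigenvalue averages to $0$, and then invoke Abel's theorem, which guarantees that the Abel mean converges to the same limit; this is the viewpoint consistent with the agreement of Ces\`aro and Abel limits used elsewhere.

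Granting existence, the identities follow cleanly. From $(I-sP)^{-1}=I+sP(I-sP)^{-1}$, multiplying by $(1-s)$ gives $\Omega_s=(1-s)I+sP\Omega_s$; since $P$ commutes with $I-sP$ and hence with its inverse, we also have $\Omega_s=(1-s)I+s\Omega_sP$. Letting $s\uparrow1$ in both yields $\Omega=P\Omega=\Omega P$. Finally, $P\Omega=\Omega$ gives $P^n\Omega=\Omega$ for all $n\ge0$, so $\Omega_s\Omega=(1-s)\sum_{n\ge0}s^nP^n\Omega=(1-s)\sum_{n\ge0}s^n\Omega=\Omega$ for every $s<1$; letting $s\uparrow1$ gives $\Omega^2=\Omega$. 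This establishes $\Omega=\Omega^2=P\Omega=\Omega P$, the only delicate point being the semisimplicity of the eigenvalue $1$, which is exactly what secures the existence of the limit.
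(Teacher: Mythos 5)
Your proof is correct, but your existence argument takes a genuinely different route from the paper's. The paper never touches spectral theory: it observes that the entries of $Q(s)=(1-s)(I-sP)^{-1}$ are uniformly bounded by $1$, extracts by diagonalization a subsequence $s'\uparrow 1$ along which $Q(s')$ converges to some $\Omega$, derives $\Omega=P\Omega=\Omega P$ and then $Q(s)\Omega=\Omega Q(s)=\Omega$ for all $s$, and finally kills non-uniqueness with an absorption trick: any other limit point $\Omega_1$ satisfies both $\Omega_1\Omega=\Omega\Omega_1=\Omega$ and, by symmetry, $\Omega_1\Omega=\Omega\Omega_1=\Omega_1$, hence $\Omega_1=\Omega$. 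So for the paper, the algebraic identities are not an afterthought — they are the engine that proves the limit exists; this is the same compactness-plus-absorption pattern the paper uses in Appendix A for convolution powers of measures. You instead get existence up front from linear algebra: power-boundedness forces the unit-modulus eigenvalues of $P$ to be semisimple, so the resolvent has at worst a simple pole at $z=1$, and the Abel mean converges to its residue, the spectral projection onto $\ker(I-P)$; the identities then follow painlessly from the resolvent identity and the Neumann series. Each approach has its merits: yours identifies the limit explicitly as the eigenprojection (making the later remark that $\mathrm{rank}\,\Omega$ equals the dimension of the fixed-point space immediate, and explaining why Abel and Ces\`aro limits agree), while the paper's is elementary and self-contained, requiring only boundedness and compactness rather than Jordan-form facts, and it reuses machinery already present elsewhere in the paper. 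The two standard facts you quote without proof (semisimplicity of unit eigenvalues for power-bounded matrices, and Ces\`aro summability implying Abel summability) are well known, so this is a gap in citation rather than in logic.
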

\begin{proof}     
We take $0<s<1$ throughout. \bigskip

For each $i,j$, the matrix elements $\avg e_i,P^ne_j.$ are uniformly bounded by 1. Denote
$$Q(s)=(1-s)(I-sP)^{-1}$$
So we have
$$\avg e_i,Q(s)e_j.=(1-s)\sum_{n=0}^\infty s^n\avg e_i,P^ne_j.\le (1-s)\sum_{n=0}^\infty s^n=1$$
so the matrix elements of $Q(s)$ are bounded by 1 uniformly in $s$ as well. Now take any sequence $\{s_j\}$,
$s_j \uparrow 1$. Take a further subsequence $\{s_{j11}\}$ along which the $11$ matrix elements converge.
Continuing with a diagonalization procedure going successively through the matrix elements, we
have a subsequence $\{s'\}$ along which all of the matrix elements converge, i.e., along which
$Q(s')$ converges. Call the limit $\Omega$. \bigskip

Writing $(I-sP)^{-1}-I=sP(I-sP)^{-1}$, multiplying by $1-s$ and letting $s \uparrow 1$ along $s'$ yields
$$\Omega=P\Omega=\Omega P$$
writing the $P$ on the other side for this last equality. Now, $s\Omega=sP\Omega$ 
implies $(1-s)\Omega=(I-sP)\Omega$. Similarly,  $(1-s)\Omega=\Omega(I-sP)$, so 
\begin{equation}\label{eq:abel} Q(s)\Omega=\Omega Q(s)=\Omega \end{equation}
Taking limits along $s'$ yields $\Omega^2=\Omega$. \bigskip

For the limit to exist, we check that $\Omega$ is the only limit point of $Q(s)$. From \eqref{eq:abel},
if $\Omega_1$ is any limit point of $Q(s)$, $\Omega_1\Omega=\Omega\Omega_1=\Omega$. 
Interchanging r\^oles of $\Omega_1$ and $\Omega$ yields 
$\Omega_1\Omega=\Omega\Omega_1=\Omega_1$ as well, i.e., $\Omega=\Omega_1$.\hfill\qedhere
\end{proof}

If $P$ is stochastic, it has $u^+$ as a nontrivial fixed point. In general we have

\begin{corollary}  
Let $\Omega=\lim\limits_{s\uparrow1}\, (1-s)(I-sP)^{-1}$ be the Abel limit of the powers of $P$.  \bigskip

$P$ has a nontrivial fixed point if and only if $\Omega\ne0$. \bigskip

\end{corollary}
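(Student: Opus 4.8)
The statement to prove is: $P$ has a nontrivial fixed point if and only if $\Omega \ne 0$, where $\Omega$ is the Abel limit. The plan is to prove the two directions separately, leaning heavily on the relation $\Omega = P\Omega = \Omega P = \Omega^2$ already established in the preceding Proposition. The key structural fact I would exploit is that $\Omega$ is an idempotent that commutes appropriately with $P$, so its range (and nullspace) interact cleanly with the fixed-point set of $P$.

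For the forward direction, suppose $P$ has a nontrivial fixed point, i.e., a nonzero vector $v^+$ with $Pv^+ = v^+$. Then $P^n v^+ = v^+$ for all $n$, so from the series representation $Q(s)v^+ = (1-s)\sum_{n=0}^\infty s^n P^n v^+ = (1-s)\sum_{n=0}^\infty s^n v^+ = v^+$. Taking the Abel limit as $s \uparrow 1$ gives $\Omega v^+ = v^+ \ne 0$, so $\Omega \ne 0$. This direction is short and requires only the series expansion used in the Proposition's proof. (The same argument runs with a left fixed row vector and $v\,Q(s) = v$, if one prefers to read "fixed point" on the left.)

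For the reverse direction, suppose $\Omega \ne 0$. I would pick any nonzero column $\Omega w^+$ in the range of $\Omega$ and set $v^+ = \Omega w^+$; such a column exists precisely because $\Omega \ne 0$. Then, using $P\Omega = \Omega$, we have $P v^+ = P\Omega w^+ = \Omega w^+ = v^+$, so $v^+$ is a fixed point of $P$, and it is nonzero by construction. This shows $P$ has a nontrivial fixed point. The argument is essentially dual to the forward direction and again uses only the relation $P\Omega = \Omega$.

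The main obstacle, if any, is purely bookkeeping about whether "fixed point" refers to a left or right vector, since $P$ substochastic need not be symmetric and the two fixed-point spaces can differ. I would resolve this by noting that $\Omega = P\Omega = \Omega P$ holds on both sides, so $\Omega \ne 0$ forces a nonzero column (a right fixed vector of $P$) and equally a nonzero row (a left fixed vector of $P$); thus the equivalence holds for either reading, and no genuine difficulty arises. The substochastic (rather than stochastic) setting is handled automatically because the entire argument runs through $\Omega$ and never requires $P$ to preserve the all-ones vector.
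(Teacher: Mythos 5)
Your proof is correct and follows essentially the same route as the paper: the forward direction establishes $Q(s)v^+=v^+$ for a fixed point $v^+$ and passes to the Abel limit to get $\Omega v^+=v^+\ne0$, while the reverse direction uses $P\Omega=\Omega$ to exhibit a nonzero column of $\Omega$ as a nontrivial fixed point. The only difference is cosmetic (your explicit series computation and the left/right bookkeeping remark), so nothing further is needed.
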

\begin{proof}  
If $\Omega\ne0$, then $P\Omega=\Omega$ shows that any nonzero column of $\Omega$ is a nontrivial fixed point.
On the other hand, if $Pv=v\ne0$, then, as in the above proof, $Q(s)v=v$ and hence $\Omega v=v$ shows that $\Omega\ne0$.\hfill\qedhere
\end{proof}
Note that since $\Omega$ is a projection, its rank is the dimension of the space of fixed points.
\end{appendix}

\end{document}